\documentclass[10pt, a4paper]{amsart}

\usepackage{enumerate, mathtools}
\usepackage{hyperref}
\hypersetup{
    colorlinks=true,
    linkcolor=blue,
    filecolor=blue,      
    urlcolor=blue,
    citecolor=blue,
}
\usepackage{amsmath}
\usepackage{amsthm}
\usepackage{amssymb}
\usepackage{stmaryrd}
\usepackage[matha,mathx]{mathabx} 
\usepackage{xcolor}
\usepackage{tikz}
\usepackage{tikz-cd}
\usepackage[labelformat=simple]{subcaption}
\usepackage{url}
\usepackage{bm}

\numberwithin{equation}{section}
\numberwithin{figure}{section}

\newtheorem{thm}{Theorem}[section]
\newtheorem*{thm*}{Theorem}
\newtheorem*{con*}{Conjecture}
\newtheorem{lem}[thm]{Lemma}
\newtheorem{prop}[thm]{Proposition}
\newtheorem{cor}[thm]{Corollary}

\newtheorem{conj}[thm]{Conjecture}

\theoremstyle{definition}
\newtheorem{defn}[thm]{Definition}
\newtheorem{remark}[thm]{Remark}

\newtheorem{quest}[thm]{Question}

\newcommand{\N}{\mathbb{N}}

\newcommand{\F}{\mathbb{F}}
\newcommand{\Z}{\mathbb{Z}}

\renewcommand{\phi}{\varphi}

\newcommand{\pbinom}[3][]{
    \genfrac{[}{]}{0pt}{}{#2}{#3}_{\ifthenelse{\isempty{#1}}{p}{#1}}
}
\renewcommand{\leq}{\leqslant}
\renewcommand{\geq}{\geqslant}
\renewcommand{\epsilon}{\varepsilon}

\renewcommand{\d}{\mathrm{d}}

\allowdisplaybreaks

\newcommand{\LoF}{\mathcal{L}}
\newcommand{\propL}{\overline{\LoF}}

\newcommand{\covecs}{\mathcal{C}}
\newcommand{\topes}{\mathcal{T}}
\newcommand{\FL}{\mathcal{F}}
\newcommand{\propF}{\overline{\FL}}
\newcommand{\A}{\mathcal{A}}

\newcommand{\cfHP}{\mathsf{cfHP}}
\newcommand{\EP}[2]{E_{#1}^{\mathsf{#2}}}
\newcommand{\zero}{\mathrm{z}}

\title{On the geometry of flag Hilbert--Poincar\'e series for matroids}

\author{Lukas K\"uhne}
\address{Max Planck Institute for Mathematics in the Sciences, Leipzig, Germany}
\address{Fakult\"at f\"ur Mathematik, Universit\"at Bielefeld, D-33501 Bielefeld, Germany}
\email{lkuehne@math.uni-bielefeld.de}

\author{Joshua Maglione} 
\address{Fakult\"at f\"ur Mathematik,
  Universit\"at Bielefeld, D-33501 Bielefeld, Germany}
\email{jmaglione@math.uni-bielefeld.de}

\thanks{This research was partially supported by DFG-grant VO~1248/4-1 project number~373111162.}

\keywords{Coarse flag polynomial, Eulerian polynomials, Igusa zeta functions, oriented matroids}

\makeatletter
\@namedef{subjclassname@2020}{\textup{2020} Mathematics Subject Classification}
\makeatother

\subjclass[2020]{05B35, 52C40}

\begin{document}

\date{\today} 
\maketitle

\vspace{-2em}

\begin{abstract}
    We extend the definition of coarse flag Hilbert--Poincar\'e series to
    matroids; these series arise in the context of local Igusa zeta functions
    associated to hyperplane arrangements. We study these series in the case of
    oriented matroids by applying geometric and combinatorial tools related to
    their topes. In this case, we prove that the numerators of these series are
    coefficient-wise bounded below by the Eulerian polynomial and equality holds
    if and only if all topes are simplicial. Moreover this yields a sufficient
    criterion for non-orientability of matroids of arbitrary rank. 
\end{abstract}

\section{Introduction}

The flag Hilbert--Poincar\'e series associated to a hyperplane arrangement,
defined in~\cite{MV:fHPseries}, is a rational function in several variables
connected to local Igusa zeta functions~\cite{BSY:hyperplane-zf}. In fact,
polynomial substitutions of the variables of the flag Hilbert--Poincar\'e series
also yield motivic zeta functions associated to matroids~\cite{JKU:motivic};
see~\cite{vanderVeer} for the topological analog. There are also substitutions
yielding so-called ask zeta functions associated to certain modules of
matrices~\cite{RV:CICO}; see~\cite[Prop.~4.8]{MV:fHPseries}. The
analytic and arithmetic properties of these zeta functions are, therefore,
heavily influenced by the combinatorics of the flag Hilbert--Poincar\'e series.
Here, we bring in combinatorial tools to better understand features of this
series.

We consider a specialization in variables $Y$ and $T$, called the coarse flag
Hilbert--Poincar\'e series, which seems to have remarkable combinatorial
properties. In~\cite{MV:fHPseries}, it was shown that for most Coxeter
hyperplane arrangements, the numerator of this specialization at $Y=1$ is equal
to an Eulerian polynomial. We generalize this to the setting of oriented
matroids, a combinatorial abstraction of the face structure determined by real
hyperplane arrangements. We show that the numerator can be better understood
from the geometry of the topes, which are analogs of the chambers for real
hyperplane arrangements. This settles a question by Voll and the second
author~\cite[Quest.~1.7]{MV:fHPseries} for the case of real arrangements, asking
about which properties of a hyperplane arrangement guarantee the equality to
Eulerian polynomials mentioned above.

\subsection{Flag Hilbert--Poincar\'e series for matroids}

Let $M$ be a matroid, with ground set $E$, and $\LoF(M)$ its lattice of flats,
with bottom and top elements denoted by $\hat{0}$ and $\hat{1}$, respectively.
Relevant definitions concerning matroids and oriented matroids are given in
Section~\ref{sec:preliminaries}. Let $\mu_M : \LoF(M) \rightarrow \Z$ be the
M\"obius function on~$\LoF(M)$, where $\mu_M(\hat{0})=1$ and
$\mu_M(X)=-\sum_{X'<X}\mu_M(X')$. A well-studied invariant of a matroid $M$ is
the \emph{Poincar\'e polynomial}  
\begin{align*}
    \pi_M(Y) &= \sum_{X\in \LoF(M)} \mu_M(X)(-Y)^{r(X)} ,
\end{align*}
where $r(X)$ is the \emph{rank} of $X$ in $\mathcal{L}(M)$, viz.~one less than
the maximum over the number of elements of all flags from $\hat{0}$ to $X$. If
$M$ is realized by a hyperplane arrangement $\mathcal{A}$, then its Poincar\'e
polynomial captures topological and algebraic properties of
$\mathcal{A}$~\cite{OT:hyperplanes}.

For a poset $P$ let $\Delta(P)$ be the set of flags of $P$, and let
$\Delta_k(P)\subseteq \Delta(P)$ be the set of flags of size $k$.
If $P$ has a bottom element $\hat{0}$ and a top element $\hat{1}$ set
$\overline{P} = P\setminus\{\hat{0},\hat{1}\}$.
The \emph{flag Poincar\'e polynomial} associated to $F =
(X_1 < \cdots < X_{\ell}) \in \Delta(\propL(M))$, with $\ell\geq 0$, is the
product of Poincar\'e polynomials on the minors determined by $F$, 
\begin{align*} 
    \pi_F(Y) &= \prod_{k=0}^{\ell} \pi_{M/X_{k}|X_{k+1}}(Y),
\end{align*}
where $X_0=\hat{0}$ and $X_{\ell+1}=\hat{1}$. Here, $M/X_k$ is the contraction
of $X_k\subseteq E$ from $M$, and $M|X_{k+1}$ is the restriction of $M$ to
$X_{k+1}\subseteq E$. The lattice $\LoF(M/X_{k}|X_{k+1})$ is isomorphic to the
interval $[X_{k}, X_{k+1}]$ in $\LoF(M)$.

\begin{defn}
The \emph{coarse flag Hilbert--Poincar\'e series} of a matroid~$M$ is
\begin{align*} 
\cfHP_M(Y, T) &= \dfrac{1}{1-T}\sum_{F\in\Delta(\propL(M))} \pi_F(Y) \left(\dfrac{T}{1-T}\right)^{|F|}  = \dfrac{\mathcal{N}_M(Y, T)}{(1 - T)^{r(M)}} .
\end{align*}
We call $\mathcal{N}_M(Y, T)$ the \emph{coarse flag polynomial}:
\begin{align*}
	\mathcal{N}_M(Y, T) = \sum_{F\in\Delta(\propL(M))}  \pi_F(Y) T^{|F|}(1-T)^{r(M)-1-|F|}.
\end{align*}
\end{defn}

We call a matroid $M$ \emph{orientable} if there exists an oriented matroid
whose underlying matroid is $M$. An orientable matroid $M$ is \emph{simplicial}
if $M$ has an oriented matroid structure such that the face lattice of every
tope is a Boolean lattice---equivalently, for real hyperplane arrangements every
chamber is a simplicial cone; see details in
Section~\ref{subsec:oriented-matroids}. For example, all Coxeter arrangements
are simplicial.

\subsection{Main results}

For rational polynomials $f(T)=\sum_{k\geq 0} a_kT^k$ and $g(T) = \sum_{k\geq 0}
b_kT^k$, we write $f(T)\leq g(T)$ if $a_k\leq b_k$ for all $k\geq 0$. We write
$f(T)<g(T)$ to mean $f(T)\leq g(T)$ and $f(T)\neq g(T)$. 

The \emph{Eulerian polynomials} $\EP{r+1}{A}(T)$ and $\EP{r+1}{B}(T)$ are equal
to the $h$-polynomials of the barycentric subdivisions of the boundaries of the
$r$-dimensional simplex and the cross-polytope,
respectively~\cite[Thm.~11.3]{Petersen:EulerianNumbers}. The Eulerian
polynomials are also defined by Coxeter-theoretic descent
statistics~\cite[Sec.~11.4]{Petersen:EulerianNumbers}.
In~\cite[Thm.~D]{MV:fHPseries}, it was shown that for all Coxeter arrangements
$\mathcal{A}$ of rank $r$, without an $\mathsf{E}_8$-factor,
$\mathcal{N}_{\mathcal{A}}(1, T)/\pi_{\mathcal{A}}(1)=\EP{r}{A}(T)$. The next
theorem generalizes this result.

\begin{thm}\label{thm:oriented-lower-bound}
    Let $M$ be an orientable matroid of rank $r$. Then 
    \begin{align} \label{eqn:lower-bound-oriented}
        E_r^{\mathsf{A}}(T) \leq \dfrac{\mathcal{N}_M(1, T)}{\pi_M(1)},
    \end{align}
    and equality holds if and only if $M$ is simplicial. Moreover, 
    \begin{align*} 
        \mathcal{N}_M(1, T^{-1}) &= T^{r-1} \mathcal{N}_M(1, T). 
    \end{align*} 
\end{thm}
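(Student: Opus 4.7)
The plan is to fix an orientation $\mathcal{M}$ of $M$ and regroup the defining sum of $\mathcal{N}_M(1,T)$ by the topes of $\mathcal{M}$, producing for each tope an $h$-polynomial of a simplicial $(r-2)$-sphere. Comparing these to the Eulerian polynomial $\EP{r}{A}(T)$ coefficient-wise will give both the inequality in~\eqref{eqn:lower-bound-oriented} and its characterization of equality, while palindromic symmetry will follow from the Dehn--Sommerville relations.

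The first step is to establish the identity
\begin{align*}
\mathcal{N}_M(1,T) \;=\; \sum_{P\in\topes(\mathcal{M})} h\bigl(\sd(\partial P);\,T\bigr),
\end{align*}
where $\partial P$ is the regular CW $(r-2)$-sphere of proper faces of the tope $P$ (via the topological representation theorem of Folkman--Lawrence and Edmonds--Mandel) and $\sd(\partial P)$ is its barycentric subdivision. The essential input is Zaslavsky's theorem applied to every minor, giving $\pi_{M/X|Y}(1) = |\topes(\mathcal{M}/X|Y)|$, so that for each flag $F = (X_1 < \cdots < X_\ell) \in \Delta(\propL(M))$ the product $\pi_F(1)$ enumerates chains of covectors of $\mathcal{M}$ below a tope with prescribed zero-set sequence $F$. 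Re-indexing the resulting double sum by the terminal tope $P$ converts the sum over flags of flats into a sum over chains in the face lattice $\FL(P)$, and the inner expression $\sum_{G\in\Delta(\overline{\FL(P)})} T^{|G|}(1-T)^{r-1-|G|}$ is precisely the $h$-polynomial of the order complex $\Delta(\overline{\FL(P)}) = \sd(\partial P)$.

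The second step compares $h(\sd(\partial P);T)$ with $\EP{r}{A}(T)$ coefficient-wise. When $P$ is simplicial, $\FL(P)$ is the Boolean lattice of rank $r$, so $\sd(\partial P) = \sd(\partial \Delta^{r-1})$ and its $h$-polynomial is exactly $\EP{r}{A}(T)$ by the classical identification of the Eulerian polynomial with the $h$-polynomial of the barycentric subdivision of the simplex boundary. For a general tope, the CW sphere $\partial P$ has at least as many faces in every dimension as the simplex boundary: its facet count is at least $r$, and the same lower bound $f_{i-1}(\partial P) \geq \binom{r}{i}$ persists in every dimension. Plugging these inequalities into a Brenti--Welker-style expansion that writes $h(\sd(\Sigma);T)$ as a non-negative combination of Eulerian-type polynomials weighted by the face numbers of $\Sigma$ yields $h(\sd(\partial P);T) \geq \EP{r}{A}(T)$ coefficient-wise, with equality iff the $f$-vector of $\partial P$ coincides with that of $\partial \Delta^{r-1}$, i.e., iff $P$ is a simplex. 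Summing over the $\pi_M(1)$ topes then gives~\eqref{eqn:lower-bound-oriented} together with the equality characterization. The most delicate points are expected to be verifying the first reformulation (aligning the oriented-matroid contraction/restriction conventions for covectors with the multiplicative structure of $\pi_F(1)$) and extracting the sharp coefficient-wise bound from the face-number inequalities.

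Finally, since each $\sd(\partial P)$ is a simplicial $(r-2)$-sphere, its $h$-polynomial is palindromic of degree $r-1$ by the Dehn--Sommerville relations, and summing over topes preserves this symmetry, which yields the stated identity $\mathcal{N}_M(1,T^{-1}) = T^{r-1}\mathcal{N}_M(1,T)$.
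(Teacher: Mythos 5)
Your first and third steps are essentially the paper's argument: the regrouping of the flag sum by topes via the Las Vergnas--Zaslavsky count, giving $\mathcal{N}_M(1,T)=\sum_{\tau\in\topes(\covecs)}h(\Sigma(\tau);T)$ with $\Sigma(\tau)$ the barycentric subdivision of the boundary of the tope, and the palindromicity via Dehn--Sommerville (the paper phrases it as $\beta_{[\hat{0}_{\covecs},\tau]}(S)=\beta_{[\hat{0}_{\covecs},\tau]}([r-1]\setminus S)$ for Eulerian posets) are both there. The gap is in your second step, the per-tope inequality $\EP{r}{A}(T)\leq h(\Sigma(\tau);T)$. The ``Brenti--Welker-style expansion of $h(\sd(\Sigma);T)$ as a non-negative combination of Eulerian-type polynomials weighted by the face numbers of $\Sigma$'' that you invoke does not exist. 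First, for a non-simplicial cell sphere $\partial P$ the chain counts of its face lattice, hence $h(\sd(\partial P);T)$, are \emph{not} functions of the face numbers $f_i(\partial P)$ at all: already for $4$-polytopes two boundary complexes can have equal $f$-vectors but different flag $f$-vectors, and it is the flag $f$-vector that determines the barycentric subdivision. Second, even in the simplicial case, the expansion of $h(\sd(\Sigma);T)$ in terms of $f$-numbers has coefficient polynomials of the form $E_i^{\mathsf{A}}(T)(1-T)^{d-i}$, which have negative coefficients, so entrywise domination of $f$-vectors does not transfer to a coefficient-wise $h$-polynomial inequality; Brenti and Welker's genuinely nonnegative expansion is in terms of the $h$-vector of $\Sigma$, and the needed $h$-type monotonicity over the simplex is essentially the nontrivial statement you are trying to prove. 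So the face-number bound $f_{i-1}(\partial P)\geq\binom{r}{i}$ is far too weak an input: the inequality is a flag-vector statement.

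The paper closes exactly this gap with the cd-index: the tope interval $[\hat{0}_{\covecs},\tau]$ is Gorenstein* (Cohen--Macaulay and Eulerian), so by Ehrenborg--Karu its cd-index dominates that of the Boolean lattice $B_r$ coefficient-wise, and since $h(\Sigma(\tau);T)=\Phi_{[\hat{0}_{\covecs},\tau]}(1+T,2T)$ is a specialization of the cd-index at arguments with nonnegative coefficients, the bound $\Phi_{B_r}(1+T,2T)=\EP{r}{A}(T)\leq h(\Sigma(\tau);T)$ follows. Your equality characterization inherits the same problem (equality is read off from the flawed expansion), although that part is repairable along the lines you sketch: equality forces $h_1(\Sigma(\tau))=2^r-r-1$, and combined with $\alpha_{[\hat{0}_{\covecs},\tau]}(\{i\})\geq\binom{r}{i}$ this pins down $\alpha_{[\hat{0}_{\covecs},\tau]}(\{r-1\})=r$, i.e.\ exactly $r$ facets, which characterizes simplicial topes. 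Without the cd-index input (or some equivalent flag-vector inequality), your step 2 does not go through.
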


The key insight in the proof for Theorem~\ref{thm:oriented-lower-bound} is that
in the orientable case $\mathcal{N}_M(1, T)$ is a sum of $h$-polynomials. Each
of the summands is determined by the topes of $M$; see
Proposition~\ref{prop:h-vectors}. Theorem~\ref{thm:oriented-lower-bound} suggests that $\mathcal{N}_M(Y, T)$ is a ``$Y$-twisted'' sum of $h$-polynomials of the topes, and understanding this could address the nonnegativity conjecture of~\cite{MV:fHPseries} in the orientable case.

A byproduct of Theorem~\ref{thm:oriented-lower-bound} is a sufficient condition
for non-orientability of matroids. The rank $3$ case yields an inequality
concerning the the number of rank $2$ flats above every element in $M$.

\begin{cor}\label{cor:non-orientability} 
    Assume $M$ is a simple matroid with rank $3$, and suppose $c$ is the number
    of rank $2$ flats of $M$ and $s$ the sum of their sizes. If
    $3(c-1) < s$, then $M$ is non-orientable. 
\end{cor}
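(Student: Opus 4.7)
The plan is to apply Theorem~\ref{thm:oriented-lower-bound} contrapositively, reducing non-orientability to a failure of the coefficient-wise inequality $\EP{3}{A}(T) \leq \mathcal{N}_M(1,T)/\pi_M(1)$. For rank $r=3$, the Eulerian polynomial is $\EP{3}{A}(T) = 1 + 4T + T^2$, a degree-$2$ palindrome. By the palindromic symmetry of $\mathcal{N}_M(1,T)$ in Theorem~\ref{thm:oriented-lower-bound}, the quotient $\mathcal{N}_M(1,T)/\pi_M(1)$ is also a degree-$2$ palindrome with constant term $1$, so the inequality collapses to a single scalar condition on the coefficient of $T$.

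The bulk of the work is then an explicit computation of $\pi_M(1)$ and $\mathcal{N}_M(1,T)$ in terms of $(c,s)$. First, I would evaluate the M\"obius function directly on $\LoF(M)$: each atom contributes $-1$, each line $L$ contributes $\mu(L) = n_L - 1$, and a short summation yields $\pi_M(1) = 2(1 + s - c)$. Next, I would enumerate flags $F$ in $\propL(M)$ by their length $|F| \in \{0,1,2\}$ and evaluate each factor in $\pi_F(1)$ using that any interval $[p, L]$ with $p < L$ and the interval $[L, \hat{1}]$ is a two-element chain, contributing a factor of $2$, while $\pi_{M|p}(1) = 2$, $\pi_{M|L}(1) = 2 n_L$, and $\pi_{M/p}(1) = 2 k_p$, where $k_p$ denotes the number of lines through $p$. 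Summing over all flags and using the double-counting identity $\sum_p k_p = \sum_L n_L = s$, a direct computation gives
\begin{equation*}
    \mathcal{N}_M(1, T) = 2(1 + s - c)(1 - T)^2 + 8 s T(1 - T) + 8 s T^2 = 2(1+s-c)(1 + T^2) + 4(s+c-1) T.
\end{equation*}

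Dividing by $\pi_M(1) = 2(1+s-c)$, which is positive since $s \geq 2c$ for a simple matroid, the inequality of Theorem~\ref{thm:oriented-lower-bound} at the coefficient of $T$ reads $4 \leq \frac{2(s+c-1)}{1+s-c}$, algebraically equivalent to $s \leq 3(c-1)$. Taking the contrapositive yields the corollary. The only substantive obstacle is the careful bookkeeping of flags, intervals, and incidences in the derivation of the displayed formula; beyond this, the corollary is an immediate algebraic consequence of Theorem~\ref{thm:oriented-lower-bound}.
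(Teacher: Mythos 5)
Your proof is correct and follows essentially the same route as the paper: you compute $\mathcal{N}_M(1,T)/\pi_M(1)$ explicitly in terms of $(c,s)$ by enumerating flags (this is the $Y=1$ specialization of Proposition~\ref{prop:rank-3}, packaged in the paper as equation~\eqref{eqn:rank3-normal} in Lemma~\ref{lem:norm-N-rank3}), and then apply Theorem~\ref{thm:oriented-lower-bound} contrapositively to the linear coefficient, exactly as the paper does. Your intermediate formulas ($\pi_M(1)=2(1+s-c)$, $\mathcal{N}_M(1,T)=2(1+s-c)(1+T^2)+4(s+c-1)T$, and the reduction of $\EP{3}{A}(T)\leq \mathcal{N}_M(1,T)/\pi_M(1)$ to $s\leq 3(c-1)$) all check out.
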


It is known that the Fano matroid is non-orientable, which is also shown by
Corollary~\ref{cor:non-orientability} since it has seven rank $2$ flats, each
containing three elements. There are a number of sufficient conditions for the
non-orientability of matroids. Based on experiments using the database of
non-orientable matroids~\cite{matroiddatabase}, we report that the condition in
Corollary~\ref{cor:non-orientability} is independent from the sufficient
condition in~\cite{CsimaSawyer:6n13} for rank $3$ matroids; see also
\cite[Prop.~6.6.1(i)]{OrientedMatroids}. Moreover,
Corollary~\ref{cor:non-orientability} is related to Corollary 2.6
in~\cite{CuntzGeis} where Cuntz and Geis proved that a rank $3$ arrangement is
simplicial if and only if its underlying matroid satisfies $3(c-1)=s$ in the
notation above.

\subsection{Further questions and conjectures}
The lower bound in~\eqref{eqn:lower-bound-oriented} raises the following
question. How large or how small can the coefficients of the numerator of
$\cfHP_M(1, T)/\pi_M(1)$ be? All of our results and computations suggest the
following.

\begin{conj}\label{conj:bounds}
    For all matroids $M$ of rank $r\geq 3$, 
    \begin{align*} 
        (1 + T)^{r-1} < \dfrac{\mathcal{N}_M(1, T)}{\pi_M(1)} < E_r^{\mathsf{B}}(T). 
    \end{align*}
\end{conj}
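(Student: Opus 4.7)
The plan is to proceed by induction on the rank $r$, using a recursive decomposition of $\mathcal{N}_M(Y,T)$ obtained by partitioning $\Delta(\propL(M))$ according to the minimal flat of each nonempty flag. Writing $c_M(T) := \mathcal{N}_M(1,T)/\pi_M(1)$ and $w_X := \pi_{M|X}(1)\pi_{M/X}(1)/\pi_M(1)$, extracting the empty-flag contribution and grouping the remaining flags by their minimum $X_1 = X$ yields
\begin{equation*}
    c_M(T) = (1-T)^{r-1} + T\sum_{X\in\propL(M)} w_X\,(1-T)^{r(X)-1}\,c_{M/X}(T).
\end{equation*}
The immediate evaluations $c_{M'}(T) = 1$ for $r(M') = 1$ and $c_{M'}(T) = 1+T$ for $r(M') = 2$ anchor the induction; note that in those ranks both conjectural bounds collapse to $c_{M'}(T)$, so the inductive hypothesis is tight there.

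First I would dispose of the base case $r=3$ by direct computation. Since $\cfHP_M$ is invariant under simplification, one may assume $M$ is a simple rank-$3$ matroid with $L$ lines and incidence sum $s = \sum_\ell m_\ell$. Plugging the rank $\leq 2$ formulas into the recursion yields
\begin{equation*}
    c_M(T) = (1-T)^2 + \frac{8s\,T}{\pi_M(1)} = 1 + \frac{2(s+L-1)}{1+s-L}\,T + T^2,
\end{equation*}
using $\pi_M(1) = 2(1+s-L)$. The lower bound $(1+T)^2 < c_M(T)$ collapses to $L > 1$, and the upper bound $c_M(T) < \EP{3}{B}(T) = 1 + 6T + T^2$ collapses to $s > 2L-2$; both are automatic because simplicity of a rank-$3$ matroid forces $L \geq 3$ and $s \geq 2L$. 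Notice that this already indicates that neither bound is attained by any genuine matroid.

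For $r \geq 4$, I would substitute the inductive hypothesis $(1+T)^{r-r(X)-1} \leq c_{M/X}(T) \leq \EP{r-r(X)}{B}(T)$ into the recursion, reducing the conjecture to the coefficient-wise polynomial inequalities
\begin{equation*}
    (1-T)^{r-1} + T\sum_X w_X (1-T)^{r(X)-1}(1+T)^{r-r(X)-1} > (1+T)^{r-1}
\end{equation*}
and the analogous one with $\EP{r-r(X)}{B}(T)$ on the left and $\EP{r}{B}(T)$ on the right. The main obstacle is that no clean closed form controls $\sum_X w_X$ or its $(1-T)^{r(X)-1}$-weighted refinements, and the strictness of either target bound cannot be imported from the inductive step at flats with $r(X)\in\{r-1,r-2\}$ because the inductive bounds are tight there. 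Hence the strict gap must be extracted from the weighted sum itself, exploiting that $r \geq 3$ guarantees sufficient richness of the lattice of flats and that flats of corank $\geq 3$ contribute genuinely strict slack. A promising route is to identify an extremal combinatorial model for each bound---for the upper bound, a matroidal flag structure tied directly to the barycentric subdivision of the cross-polytope boundary defining $\EP{r}{B}$---and to establish monotonicity of $c_M(T)$ along a suitable partial order on matroids (e.g., weak maps or quotient maps), so that the induction reduces to verifying the inequality on a single extremal object per rank.
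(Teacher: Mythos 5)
You should note at the outset that the statement you are proving is stated in the paper as a \emph{conjecture}: the paper itself has no proof of it, only the partial results of Theorem~\ref{thm:bounds} (sharpness of the bounds in the limit, the rank-$3$ case, the lower bound for orientable matroids via Theorem~\ref{thm:oriented-lower-bound}, and the upper bound for orientable matroids of rank $4$ via Proposition~\ref{prop:rank-4}). Your proposal likewise does not prove it. What you do establish is correct: the recursion obtained by grouping nonempty flags by their minimal flat is valid, and your rank-$3$ computation recovers exactly the paper's formula in Lemma~\ref{lem:norm-N-rank3} and equation~\eqref{eqn:rank3-normal}, namely $\mathcal{N}_M(1,T)/\pi_M(1)=1+\tfrac{2(s+c-1)}{1+s-c}T+T^2$ for a simple rank-$3$ matroid with $c$ lines and incidence sum $s$, with the two bounds reducing to $c>1$ and $s>2c-2$. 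This is the same content as the paper's rank-$3$ argument, reached by a slightly different (recursive rather than direct) computation.

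The inductive step for $r\geq 4$, however, contains a genuine gap, which you partly acknowledge but which is worse than stated. The reduction ``substitute the inductive hypothesis into the recursion'' is not legitimate for coefficient-wise inequalities: the weights $(1-T)^{r(X)-1}$ have negative coefficients as soon as $r(X)\geq 2$, so an upper (resp.\ lower) coefficient-wise bound on $c_{M/X}(T)$ gives no coefficient-wise control on $T\,w_X(1-T)^{r(X)-1}c_{M/X}(T)$ in either direction; the two displayed ``reduced'' inequalities therefore do not imply the conjecture, and they are themselves unproved (there is no control on the weighted sums $\sum_X w_X(1-T)^{r(X)-1}(1+T)^{r-r(X)-1}$, as you note). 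The closing suggestion---monotonicity of $c_M$ under weak or quotient maps together with an extremal object per rank---is speculative and sits uneasily with the paper's own Theorem~\ref{thm:bounds}(1): both bounds are only approached in the limit (by uniform matroids and projective geometries, Propositions~\ref{prop:uniform-limit} and~\ref{prop:PG-limit}), so there is no extremal matroid attaining either bound to which a monotonicity argument could be anchored. By contrast, the paper's partial results avoid induction on rank altogether: the orientable lower bound comes from writing $\mathcal{N}_M(1,T)$ as a sum of $h$-polynomials of topes (Proposition~\ref{prop:h-vectors}) and invoking the Ehrenborg--Karu cd-index inequality, and the rank-$4$ orientable upper bound from a Varchenko-type flag-count bound. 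As a proof of the conjecture, your proposal establishes only the rank-$3$ case, which the paper already proves.
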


We note that $E_1^{\mathsf{A}}(T) = E_1^{\mathsf{B}}(T) = 1$ and
$E_2^{\mathsf{A}}(T) = E_2^{\mathsf{B}}(T)=1 + T$, and all matroids of rank 1 or
2 are both orientable and simplicial. For orientable matroids, the lower bound of
Conjecture~\ref{conj:bounds} holds by Theorem~\ref{thm:oriented-lower-bound}.
Moreover, the upper bound in Conjecture~\ref{conj:bounds} is reminiscent of
similar ``$f$-vector'' bounds proved in~\cite{Fukudaetal,Varchenko}.

\begin{thm}\label{thm:bounds}
    \begin{enumerate}
        \item If Conjecture~\ref{conj:bounds} holds, then the bounds are sharp.
        \item Conjecture~\ref{conj:bounds} holds for all matroids of rank $3$.
        Moreover for all orientable matroids, the upper bound holds for the
        linear term of the polynomials, so Conjecture~\ref{conj:bounds} holds
        for all orientable matroids of rank $4$. 
    \end{enumerate}
\end{thm}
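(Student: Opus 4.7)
The plan separates parts~(1) and~(2).

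For part~(1), assuming Conjecture~\ref{conj:bounds} holds, I would establish sharpness by exhibiting infinite families of rank $r$ matroids whose coarse flag ratios approach each bound coefficient-wise. For the upper bound, the natural candidate is the uniform matroid $U_{r,n}$: specializing the rank-$3$ computation below to $a=n$, $c=\binom{n}{2}$, $s=n(n-1)$ gives a ratio approaching $1+6T+T^2 = E_3^{\mathsf{B}}(T)$ as $n\to\infty$, and I would show a similar computation generalizes to arbitrary rank. For the lower bound, I would consider the projective geometries $\mathrm{PG}(r-1,q)$: the rank-$3$ formula applied to $a=c=q^2+q+1$ and $s=(q^2+q+1)(q+1)$ produces a ratio approaching $1+2T+T^2=(1+T)^2$ as $q\to\infty$, and again an analogous inductive argument should handle higher ranks. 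Since Conjecture~\ref{conj:bounds} is strict, these approaching families witness sharpness.

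For the rank-$3$ part of~(2), assume without loss of generality that $M$ is simple (simplification preserves $\LoF$ and hence $\cfHP$). Parameterize $M$ by $a$ (the number of atoms), $c$ (the number of lines), and $s=\sum_\ell p_\ell$ (the total point-line incidences); note $\pi_M(1) = 2(s-c+1)$. A direct expansion of $\mathcal{N}_M(1,T)$ over the four types of flags in $\propL(M)$ (empty, atom, line, and atom-in-line), using $\pi_{M/x}(1)=2 d_x$ where $d_x$ is the number of lines through atom~$x$, $\pi_{M|\ell}(1)=2 p_\ell$, and the incidence identity $\sum_x d_x = \sum_\ell p_\ell = s$, yields
\begin{equation*}
    \dfrac{\mathcal{N}_M(1,T)}{\pi_M(1)} \;=\; 1 \;+\; \dfrac{2(s+c-1)}{s-c+1}\, T \;+\; T^2 .
\end{equation*}
The lower bound $(1+T)^2$ reduces to $c\geq 2$, which holds (in fact $c\geq 3$) for any simple rank-$3$ matroid since three non-collinear atoms span three distinct lines. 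The upper bound $E_3^{\mathsf{B}}(T) = 1+6T+T^2$ reduces to $s>2c-2$, which follows from $s\geq 2c$ since every line in a simple matroid contains at least two points.

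For the orientable linear-term bound, I would apply Proposition~\ref{prop:h-vectors} to write $\mathcal{N}_M(1,T) = \sum_{\tau} h_\tau(T)$, summed over the topes of~$M$. The linear coefficient of $h_\tau$ equals $f_0^\tau - (r-1)$, where $f_0^\tau$ is the number of proper nontrivial faces of the polytope~$\tau$. Thus the linear coefficient of $\mathcal{N}_M(1,T)/\pi_M(1)$ equals $|\mathcal{T}|^{-1}\sum_F t(F) - (r-1)$, where $F$ ranges over proper nontrivial faces of the oriented matroid and $t(F)$ counts the topes containing~$F$. The target bound $3^{r-1}-r$ (the linear coefficient of $E_r^{\mathsf{B}}(T)$) is equivalent to $|\mathcal{T}|^{-1}\sum_F t(F) \leq 3^{r-1}-1$, which matches the proper face count of the $(r-1)$-dimensional cross-polytope; I would prove it via a Varchenko--Fukuda style face-count inequality bounding the average tope face count in an orientable matroid by this cross-polytope quantity, with strictness ensured since the cross-polytope's own oriented matroid realizes the simplicial minimum $E_r^{\mathsf{A}}(T) < E_r^{\mathsf{B}}(T)$ rather than the upper extreme.

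For the rank-$4$ conclusion, by Theorem~\ref{thm:oriented-lower-bound} the polynomial $\mathcal{N}_M(1,T)/\pi_M(1)$ is palindromic of degree~$3$, so it has the form $1+a_1 T+a_1 T^2+T^3$. The linear-term bound yields $a_1<23$ (the linear coefficient of $E_4^{\mathsf{B}}(T) = 1+23T+23T^2+T^3$), and palindromy extends this to $a_2=a_1<23$, establishing the upper bound; the lower bound $(1+T)^3$ is implied by the stronger $E_4^{\mathsf{A}}(T) = 1+11T+11T^2+T^3$ from Theorem~\ref{thm:oriented-lower-bound}. The main technical obstacle is the Varchenko--Fukuda type face-count inequality underlying the orientable linear-term bound: while the cross-polytope comparison is natural, a clean combinatorial argument valid in all ranks requires more than the direct parametric computation used in rank~$3$.
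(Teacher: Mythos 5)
Your overall route is the same as the paper's, and the rank~$3$ part is correct and complete: your formula $\mathcal{N}_M(1,T)/\pi_M(1)=1+\tfrac{2(s+c-1)}{s-c+1}T+T^2$ agrees with~\eqref{eqn:rank3-normal}, and reducing the bounds to $c\geq 2$ and $s>2c-2$ is exactly the argument of Lemma~\ref{lem:norm-N-rank3}; the rank~$4$ deduction from palindromicity (Theorem~\ref{thm:oriented-lower-bound}) plus a linear-coefficient bound also matches Proposition~\ref{prop:rank-4}. The genuine gap is that linear-coefficient bound itself. Via Proposition~\ref{prop:h-vectors} you correctly reduce it to the claim that the average number of proper nonzero faces of a tope is less than $3^{r-1}-1$, but then you only say you ``would prove it via a Varchenko--Fukuda style face-count inequality'' and yourself flag this as the main technical obstacle; no argument is given. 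In the paper this is precisely the step closed by Lemma~\ref{lem:N-terms} together with the inequality~\eqref{eqn:Varchenko-bound}, $\mathfrak{f}_1(\covecs)<\sum_{j=0}^{r-2}2^{r-1-j}\binom{r-1}{j}\,|\topes(\covecs)|$, imported from \cite[Prop.~4.6.9]{OrientedMatroids}, plus the trinomial identity $\sum_{j=0}^{r-2}2^{r-1-j}\binom{r-1}{j}=3^{r-1}-1$. Note also that your proposed source of strictness (that the cross-polytope's own oriented matroid realizes the simplicial minimum $\EP{r}{A}$) does not bear on strictness of the face-count bound; strictness must come from the inequality itself.

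Part~(1) is likewise only sketched beyond rank~$3$. The paper uses the same two families you name, but sharpness at general rank requires concrete ingredients you leave as ``a similar computation generalizes'': the closed formula for $\mathcal{N}_{U_{r,m}}(1,T)/\pi_{U_{r,m}}(1)$ from \cite[Prop.~6.9]{MV:fHPseries}, the identity of Lemma~\ref{lem:EulerianAB} expressing $\EP{r}{B}(T)$ in terms of type-$\mathsf{A}$ Eulerian polynomials (needed to recognize the $m\to\infty$ limit as $\EP{r}{B}(T)$ in Proposition~\ref{prop:uniform-limit}), and the Gaussian-binomial flag count of Lemma~\ref{lem:cfHP-PG} giving $\lim_{q\to\infty}\mathcal{N}_{PG(r-1,q)}(1,T)/\pi_{PG(r-1,q)}(1)=(1+T)^{r-1}$ in Proposition~\ref{prop:PG-limit}. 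As written, only the rank-$3$ computation and the palindromy step are actually proved; the sharpness limits and the orientable linear-term bound rest on unestablished claims, so the proposal is an outline of the paper's proof rather than a proof.
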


In fact, more is known to hold for $\mathcal{N}_M(Y, T)$ in the case where
$r(M)\leq 3$. We prove, in Proposition~\ref{prop:rank-3}, that the numerator is
nonnegative, palindromic, and when $Y=1$ real-rooted. In particular,
Conjecture~E from~\cite{MV:fHPseries} holds for all central hyperplane arrangements with
rank at most $3$. We are also interested in whether or not these three
properties hold for the numerator of $\mathcal{N}_M(Y, T)$ for all matroids of
rank larger than $3$. For oriented matroids of rank $4$, the polynomial
$\mathcal{N}_M(1, T)$ is real-rooted, which follows from
Theorem~\ref{thm:oriented-lower-bound}. This raises the following general
question.

\begin{quest}\label{quest:realrooted}
    Is the polynomial $\mathcal{N}_M(1, T)$ real-rooted for all matroids $M$?
\end{quest}

Brenti and Welker asked whether the $h$-polynomial of the barycentric
subdivision of a general polytope is real-rooted~\cite{BrentiWelker}. In the
case of real hyperplane arrangements and their associated zonotopes, this
question is related to Question~\ref{quest:realrooted} via our geometric
interpretation of $\mathcal{N}_M(1,T)$ although the precise connection is not
yet well understood.

\subsection{Other matroid invariants}

Given the large number of polynomial matroid invariants, we consider
$\mathcal{N}_M(Y, T)$ in this larger context. The invariant $\mathcal{N}_M(Y,
T)$ is a valuative matroid invariant~\cite[Sec.~14.3]{FS22}, which means that it
behaves well with respect to subdivisions of the matroid base
polytope~\cite{DF10}. To see this, observe that 
\begin{align*} 
    \mathcal{N}_M(Y, T) &= \pi_{M}(Y)(1 - T)^{r(M)-1} + \sum_{X\in \propL(M)} \pi_{M|X}(Y) T (1 - T)^{r(X)-1} \mathcal{N}_{M/X}(Y, T) .
\end{align*}
Using an argument similar to those in Section 8 of~\cite{AS22},
$\mathcal{N}_M(Y, T)$ is a convolution of the Poincar\'e polynomial, and
by~\cite[Thm.~C]{AS22}, it is a valuative matroid invariant. So the coarse flag
polynomial is amenable to techniques recently described by Ferroni and
Schr\"oter in their preprint~\cite{FS22}, and it is a specialization of the
universal $\mathcal{G}$-invariant as proved in~\cite[Thm.~1.4]{DF10}.

Because $\mathcal{N}_M(Y,T)$ is a convolution of the Poincar\'e polynomial or,
similarly, the characteristic polynomial, we briefly consider other invariants
that are also similarly convoluted---such a list appears in Table 1
of~\cite{AS22}. As $\mathcal{N}_M(Y,T)$ and the motivic zeta function
from~\cite{JKU:motivic} are two bivariate specializations of the flag
Hilbert--Poincar\'e series, the two are certainly related but are distinct. The
polynomial $\mathcal{N}_M(Y, T)$ is not a specialization of the Tutte polynomial
of $M$ since $\mathcal{N}_M(Y, T)$ does not satisfy a deletion-contraction
relation. The Kazhdan--Lusztig polynomial, defined in~\cite{EPW16}, does not
seem to be a specialization of the coarse flag polynomial, and similarly Eur's
volume polynomial~\cite[Def.~3.1]{Eur20} does not seem to specialize to the coarse
flag polynomial. The precise relationship between these two polynomials and the
coarse flag polynomial is not entirely clear at this stage.

\subsection{Structure of the article}
We give definitions for matroids and oriented matroids in
Section~\ref{sec:preliminaries}. We prove Theorem~\ref{thm:oriented-lower-bound}
in Section~\ref{sec:oriented}, and Theorem~\ref{thm:bounds} is proved in
Section~\ref{sec:extreme}. Section~\ref{sec:rank3} is devoted to general
matroids of rank $3$. There we also describe a pair of real hyperplane
arrangements with the same coarse flag polynomial and different underlying
matroids (Remark~\ref{rem:AB}), answering a question of Voll and the second
author~\cite{MV:fHPseries}. 

\section{Preliminaries}\label{sec:preliminaries}

We let $\N$ and $\N_0$ be the set of positive and nonnegative integers
respectively. For $n\in\N$, set $[n]=\{1,\dots, n\}$ and $[n]_0=[n]\cup \{0\}$. 

\subsection{Matroids} \label{subsec:matroids}

Let $E$ be a finite set, called the \emph{ground set} and $2^E$ its
power set. A \emph{matroid} $M$ is a pair $(E, \LoF)$ with $\LoF\subseteq  2^E$
its set of \emph{flats} satisfying:
\begin{enumerate} 
    \item $E\in \LoF$, that is $E$ is a flat,
    \item if $X,X'\in \LoF$ are flats then $X\cap X'\in \LoF$ is also a flat,
    and 
    \item if $X\in \LoF$ is a flat then each element of $E\setminus X$ is in
    precisely one of the flats that covers $X$, that is the minimal flats
    strictly containing~$X$.
\end{enumerate}
Ordering the flats by inclusion gives the set of flats $\LoF$ the structure of a
poset, called the \emph{lattice of flats} of the matroid $M$.

One of the main motivations of matroids comes from linear algebra. For a finite
set of hyperplanes $\mathcal{H}=\{H_e ~|~ e \in E\}$ in an $\F$-vector space
$V$, the associated intersection poset $\LoF(\mathcal{H})\coloneqq
\{\bigcap_{e\in S}H_e\mid S\subseteq E\}$ is a poset ordered by reverse
inclusion. The pair $(\mathcal{H},\LoF_{\mathcal{H}})$ is a matroid which is
called an \emph{$\F$-linear matroid}. A matroid $M=(E,\LoF)$ is called
\emph{realizable} over a field $\F$ if there exists an $\F$-linear matroid
$(\mathcal{H},\LoF_{\mathcal{H}})$ for some set of hyperplanes
$\mathcal{H}=\{H_e ~|~ e \in E\}$ with $\LoF = \LoF({\mathcal{H}})$ as posets.
For example, the free matroid $U_{n,n} = ([n], 2^{[n]})$ is realized by the
coordinate hyperplanes over an arbitrary field $\F$ since each $S\subseteq [n]$
is in one-to-one correspondence with an intersection of hyperplanes.

Ordering the flats by inclusion turns $\LoF$ into a ranked lattice. Let
$\LoF_k(M)$ be the set of all flats of rank $k$ for any $k\ge 0$. The rank of
$E$ is the \emph{rank} of the matroid which we denote by $r(M)$. Given a matroid
$M$ we denote its lattice of flats by~$\LoF(M)$. If $\LoF_0(M)=\{\varnothing\}$
and $\LoF_1(M)$ contains only singletons, then $M$ is a \emph{simple matroid}.
For each matroid $M$, there is a unique simple matroid $\mathrm{sim}(M)$ such
that~$\LoF(M)\cong\LoF(\mathrm{sim}(M))$. 

We define two operations on matroids: restriction and contraction relative to a
flat. For $X\in \LoF(M)$, the \emph{restriction} of $M$ to $X$ is the matroid
$M|X := (X, \{X' \in\LoF(M) ~|~ X'\subseteq X\})$. The \emph{contraction} of $X$
from $M$ is the matroid $M/X := (E\setminus X, \{X'\setminus X ~|~ X'\in
\LoF(M), X\subseteq X'\})$.

\subsubsection{Uniform matroids and projective geometries}

We recall two families of matroids which will be important in
Section~\ref{sec:extreme}. The first is the family of \emph{uniform matroids}
$U_{r, n}$ for all $n\geq r\geq 1$. The ground set is $[n]$ and the flats of
$U_{r,n}$ different from $[n]$ comprise all of the $k$-element subsets of $[n]$
for $k\in [r-1]_0$. The second family of matroids is the \emph{projective
geometry} $PG(r-1, q)$ for $r\in \N$ and $q$ a prime power. The ground set is
the set of $1$-dimensional subspaces of $\mathbb{F}_q^r$, and the flats are the
subspaces of $\mathbb{F}_q^r$. It is known that uniform matroids are orientable
and projective geometries are non-orientable for $r\geq 3$.

\subsection{Oriented matroids}\label{subsec:oriented-matroids}

Our notation and terminology for oriented matroids closely
follows~\cite{OrientedMatroids}. We define oriented matroids by their set of
covectors. These are ``vectors'' in symbols $+$, $-$, and $0$, abstracting how a
real hyperplane partitions the vector space into three sets. Each covector
describes a cone relative to each hyperplane. For $X\in\{+,-,0\}^E$, let $-X$ be
defined by replacing $+$ with $-$ and vice versa, keeping the $0$ symbol
unchanged. For $X, Y\in\{+,-,0\}^E$, define $X\circ Y$ via  
\begin{align*} 
    (X\circ Y)_e &= \begin{cases} 
        X_e & \text{if } X_e\neq 0, \\
        Y_e & \text{if } X_e = 0.
    \end{cases} 
\end{align*} 
Lastly, the \emph{separation set} of $X$ and $Y$ is $S(X, Y) = \{e\in E ~|~
X_e=-Y_e\neq 0\}$. A subset $\covecs\subseteq \{+,-,0\}^E$ is a set of
\emph{covectors} of an oriented matroid if $\covecs$ satisfies 
\begin{enumerate} 
    \item $\hat{0}_{\covecs}:=(0,\dots, 0)\in \covecs$, 
    \item if $X\in\covecs$, then $-X\in\covecs$, 
    \item if $X, Y\in \covecs$, then $X\circ Y\in\covecs$,
    \item if $X, Y \in \covecs$ and $e\in S(X, Y)$, then there exists $Z\in
    \covecs$ such that $Z_e=0$ and $Z_f=(X\circ Y)_f=(Y\circ X)_f$ for all
    $f\in E\setminus S(X, Y)$. 
\end{enumerate} 
The pair $M=(E, \covecs)$ is an \emph{oriented matroid} with ground set $E$ and
covectors~$\covecs$. 

The \emph{face lattice} relative to $(E, \covecs)$, denoted by $\FL(\covecs)$,
is the set of covectors together with a (unique) top element $\hat{1}_{\covecs}$
partially ordered by the following relation. For $X, Y\in\covecs$, let $X\leq Y$
if $X_e\in\{0, Y_e\}$ for all $e\in E$. The maximal covectors of
$\propF(\covecs)$ are called \emph{topes}, and the set of all topes is denoted
by $\topes(\covecs)\subseteq \covecs$. 

We define the \emph{zero map} $\zero : \covecs \rightarrow 2^E$ sending $X$ to
$\{e\in E ~|~ X_e=0\}$. The image $\zero(\covecs)\subseteq 2^E$ satisfies the
lattice of flats conditions in Section~\ref{subsec:matroids}, and therefore,
$(E,\zero(\covecs))$ is a matroid~\cite[Prop.~4.1.13]{OrientedMatroids}. We
write $\LoF(M)$ for the lattice of flats of the underlying matroid for $M$. A
matroid $M=(E,\LoF)$ is orientable if there exists an oriented matroid
$(E,\covecs)$ with underlying matroid $M$. 

\section{Matroids of rank $3$}\label{sec:rank3}

We explicitly determine the coarse flag Hilbert--Poincar\'e series for
matroids of rank not larger than $3$. Since $\mathcal{N}_M(Y, T)$ depends only
on $\LoF(M)$, it follows that $\mathcal{N}_M(Y,
T)=\mathcal{N}_{\mathrm{sim}(M)}(Y, T)$. First we require the next lemma, which
follows from the definition of the M\"obius function. 

\begin{lem}\label{lem:poincare-poly-rank3}
    Let $M$ be a simple rank $3$ matroid on $E=[n]$. Let $c$ be the number of
    rank $2$ flats of $M$ and $s$ the sum of their sizes. Then 
    \begin{align*} 
        \pi_M(Y) &= 1 + nY + (s - c)Y^2 + (1 + s - n - c)Y^3 \\
        &= \left(1 + (n-1)Y + (1 + s - n - c)Y^2\right)(1+Y).
    \end{align*}
\end{lem}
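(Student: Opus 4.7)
The plan is to use the standard enumeration of flats in a simple rank $3$ matroid and directly apply the defining recursion $\mu_M(X)=-\sum_{X'<X}\mu_M(X')$ rank by rank, then observe that divisibility by $(1+Y)$ is automatic.

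First, since $M$ is simple with ground set $[n]$, the ranks of the flats split as follows: $\LoF_0(M)=\{\varnothing\}$, $\LoF_1(M)$ consists of the $n$ singletons, $\LoF_2(M)$ contains $c$ rank $2$ flats (each corresponding to a ``line''), and $\LoF_3(M)=\{[n]\}$. The contributions to $\pi_M(Y)=\sum_X \mu_M(X)(-Y)^{r(X)}$ from the first two levels are immediate: the empty flat contributes $1$, and each singleton $\{i\}$ satisfies $\mu_M(\{i\})=-\mu_M(\varnothing)=-1$, yielding a total contribution of $nY$ from rank $1$.

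Next I would compute the rank $2$ contribution. For a rank $2$ flat $X$ of size $k$, the interval below $X$ consists of $\varnothing$ together with the $k$ atoms $\{i\}\subseteq X$, so
\begin{align*}
    \mu_M(X) &= -\mu_M(\varnothing)-\sum_{i\in X}\mu_M(\{i\}) = -1 + k = k-1.
\end{align*}
Summing over all rank $2$ flats, $\sum_{X\in\LoF_2(M)}\mu_M(X) = s - c$, giving the term $(s-c)Y^2$. For the top element, I would use $\mu_M([n])=-\bigl(1 - n + (s-c)\bigr) = n+c-s-1$, contributing $(n+c-s-1)(-Y)^3=(1+s-n-c)Y^3$. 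Assembling the four levels yields the first displayed formula.

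For the second equality, I would simply verify that $(1+Y)$ is a factor. One can either (i) expand $\bigl(1+(n-1)Y+(1+s-n-c)Y^2\bigr)(1+Y)$ and match coefficients against the first expression, or (ii) invoke the general identity $\pi_M(-1)=\sum_{X\in\LoF(M)}\mu_M(X)=0$, valid for every matroid of positive rank by the defining property of the M\"obius function, which forces $(1+Y)\mid\pi_M(Y)$, and then compute the quotient by polynomial division. Both routes are purely mechanical; the main (very mild) obstacle is just bookkeeping the rank $2$ M\"obius computation, since it requires relating $\mu_M(X)$ to the size of the line $X$ and then summing the sizes to produce the invariant $s$.
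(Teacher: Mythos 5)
Your proof is correct and matches the paper's (unwritten) intent: the paper simply asserts that the lemma follows from the definition of the M\"obius function, and your rank-by-rank computation $\mu_M(\varnothing)=1$, $\mu_M(\{i\})=-1$, $\mu_M(X)=|X|-1$ for lines, $\mu_M([n])=n+c-s-1$ is exactly that argument, with the factorization by $(1+Y)$ verified correctly either way you suggest.
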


\begin{prop}\label{prop:rank-3}
    For a simple rank $3$ matroid $M$ with ground set of size $n$, let $c$ be
    the number of rank $2$ flats of $M$ and $s$ the sum of their sizes. Then
    \begin{align*} 
       \mathcal{N}_M(Y, T) &= \pi_M(Y) + \phi_M(Y)T + Y^3\pi_M(Y^{-1})T^2 ,
    \end{align*} 
    where 
    \begin{align*} 
        \phi_M(Y) &= n+c-2 + (2s - n + c)Y + (2s - n + c)Y^2 + (n+c-2)Y^3.
    \end{align*}
\end{prop}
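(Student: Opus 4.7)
The plan is to compute $\mathcal{N}_M(Y,T)$ directly from its definition by enumerating flags in $\propL(M)$. Since $r(M)=3$ and $M$ is simple, the set $\Delta(\propL(M))$ splits into four classes: the empty flag; a single point $p \in \LoF_1(M)$; a single line $\ell \in \LoF_2(M)$; and a length-two chain $p < \ell$ with $p \in \ell$. The corresponding $T$-weights $T^{|F|}(1-T)^{2-|F|}$ are $(1-T)^2$, $T(1-T)$, $T(1-T)$, and $T^2$, respectively.

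Next I would read off the relevant Poincar\'e factors from the M\"obius function. A rank-$1$ simple lattice has Poincar\'e polynomial $1+Y$, while a rank-$2$ interval with $d$ atoms contributes $1 + dY + (d-1)Y^2$. This yields $\pi_{M|\{p\}}(Y) = 1+Y$ and $\pi_{M/p}(Y) = 1 + d_p Y + (d_p-1)Y^2$, where $d_p$ denotes the number of lines through $p$; $\pi_{M|\ell}(Y) = 1 + |\ell|Y + (|\ell|-1)Y^2$ and $\pi_{M/\ell}(Y) = 1+Y$; and for a chain $p < \ell$, every factor of $\pi_F(Y)$ is $1+Y$, so $\pi_F(Y) = (1+Y)^3$. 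The crucial enumerative ingredient is the double-counting identity $\sum_{p \in \LoF_1(M)} d_p = \sum_{\ell \in \LoF_2(M)} |\ell| = s$, together with $|\LoF_1(M)| = n$ and $|\LoF_2(M)| = c$.

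Summing over the four flag classes then gives
\begin{align*}
\mathcal{N}_M(Y,T) &= \pi_M(Y)(1-T)^2 + (1+Y)\bigl[(n+c) + 2sY + (2s-n-c)Y^2\bigr]T(1-T) \\
&\quad + s(1+Y)^3 T^2.
\end{align*}
Expanding $(1-T)^2$ and $T(1-T)$ and collecting powers of $T$ produces three polynomials in $Y$ that I would match against the proposition's formula. The $T^0$-coefficient is $\pi_M(Y)$ by construction. For the $T$-coefficient, subtracting $2\pi_M(Y)$ (as given by Lemma~\ref{lem:poincare-poly-rank3}) from the middle bracket times $(1+Y)$ yields $\phi_M(Y)$, the palindromic polynomial in the statement. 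For the $T^2$-coefficient, combining $\pi_M(Y)$, the negative of the middle contribution, and $s(1+Y)^3$ should simplify, after using Lemma~\ref{lem:poincare-poly-rank3}, to $(1 + s - n - c) + (s-c)Y + nY^2 + Y^3$, which is exactly $Y^3 \pi_M(Y^{-1})$.

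The only real obstacle is the bookkeeping in the $T^2$-coefficient: one must verify that the $s(1+Y)^3$ term exactly cancels the ``leftover'' pieces from $\pi_M(Y) - A(Y)$ (where $A(Y)$ denotes the bracketed polynomial above times $1+Y$) to produce the reverse of $\pi_M(Y)$. This is a short but unavoidable polynomial identity whose success rests on the identity $\sum_p d_p = \sum_\ell |\ell| = s$ and on the factored form of $\pi_M(Y)$ from Lemma~\ref{lem:poincare-poly-rank3}; once these are applied, the proof reduces to coefficient-wise comparison.
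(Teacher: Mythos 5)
Your proposal is correct and follows essentially the same route as the paper's proof: you enumerate flags by class (empty, point, line, point--line chain), use the rank-$2$ uniform Poincar\'e polynomials together with the double count $\sum_p d_p = \sum_\ell |\ell| = s$ to get the same intermediate expression $\pi_M(Y)(1-T)^2 + (1+Y)^2(n+c+(2s-n-c)Y)T(1-T) + s(1+Y)^3T^2$, and then match coefficients via Lemma~\ref{lem:poincare-poly-rank3}. The final bookkeeping you flag for the $T$ and $T^2$ coefficients does work out exactly as you predict.
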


\begin{proof}
    Recall the formula for the uniform matroid $U_{2,m}$ of rank $2$ on $[m]$, \begin{align*} 
        \pi_{U_{2,m}}(Y) &= (1+Y)(1+(m-1)Y). 
    \end{align*} 
    We first determine the contribution from the flags of size $1$. For
    $X\in\LoF_1(M)$, let~$m_X$ be the number of rank $2$ flats containing $X$.
    Since $M$ has rank $3$, it follows that 
    \begin{align*} 
        \sum_{X\in\propL(M)} \pi_{M/X}(Y) \pi_{M|X}(Y) &= (1 + Y)^2 \sum_{X\in\LoF_2(M)}(1 + (|X|-1)Y) \\
        &\qquad + (1 + Y)^2\sum_{X\in\LoF_1(M)} (1 + (m_X-1)Y) \\
        &= (1 + Y)^2(n + c + (2s - n - c)Y) .
    \end{align*}
    For all maximal flags $F$, $\pi_F(Y) = (1 + Y)^{3}$, so 
    \begin{align*} 
        \mathcal{N}_M(Y, T) &= \pi_M(Y) (1-T)^2 + (1 + Y)^2(n + c + (2s - n - c)Y)(T - T^2) \\
        &\qquad + s(1 + Y)^3T^2 .
    \end{align*} 
    Using Lemma~\ref{lem:poincare-poly-rank3}, the coefficient of $T$, as a
    polynomial in $Y$, is equal to $\phi_M(Y)$, and the coefficient of $T^2$ is 
    \begin{align*} 
        1 + s - n - c + (s-c)Y + nY^2 + Y^3 &= Y^3\pi_M(Y^{-1}). \qedhere
    \end{align*}
\end{proof}

\begin{remark}\label{rem:AB}
    With Proposition~\ref{prop:rank-3}, we answer a question of Voll and the
    second author~\cite[Quest.~6.2]{MV:fHPseries}, about whether there exists a
    distinct pair of arrangements with the same coarse flag polynomial. We
    describe a pair $\mathcal{A}$ and $\mathcal{B}$ of real arrangements in
    Figure~\ref{fig:arrangements} which we found in the database
    of~\cite{Barakat} and are given by:
    \begin{align*} 
        \mathcal{A} &: xyz(x+y)(x-y)(x+2y)(x+z)(y+z)(x+y+z)=0, \\
        \mathcal{B} &: xyz(x+y)(x+2y)(x-2y)(x+z)(2y+z)(2x+2y+z)=0. 
    \end{align*}
    They both contain nine hyperplanes with $c=15$ and $s=39$
    using the above notation. The arrangement $\mathcal{A}$ has exactly two
    planes with three lines of intersection, whereas~$\mathcal{B}$
    has exactly one such plane, so they are nonequivalent.
\end{remark}

\begin{figure}[h]
    \centering 
    \begin{subfigure}[b]{0.49\textwidth}
        \centering
		\includegraphics[width=.75\linewidth]{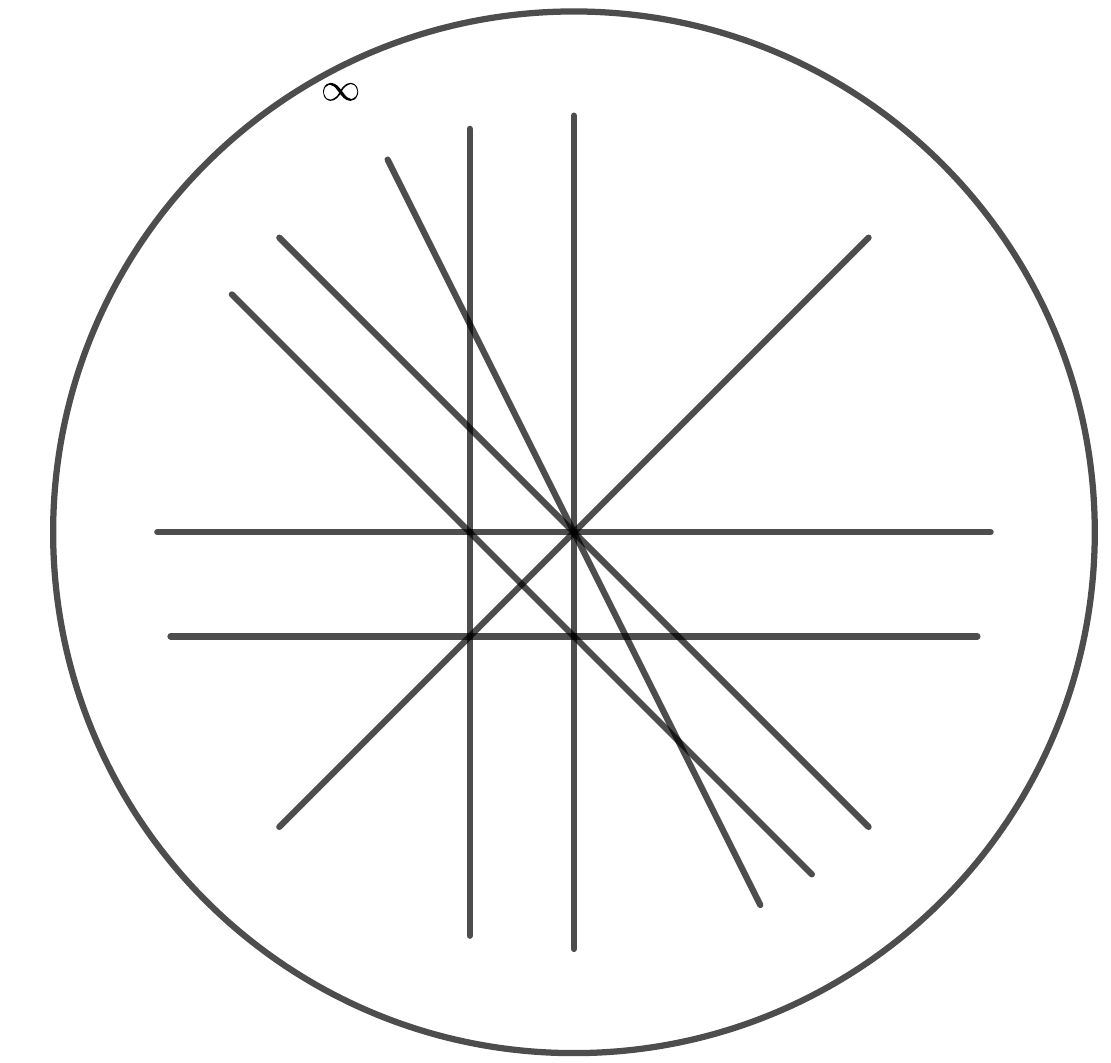}
        \caption{The arrangement $\mathcal{A}$.}
    \end{subfigure}~%
    \begin{subfigure}[b]{0.49\textwidth}
        \centering
        \includegraphics[width=.75\linewidth]{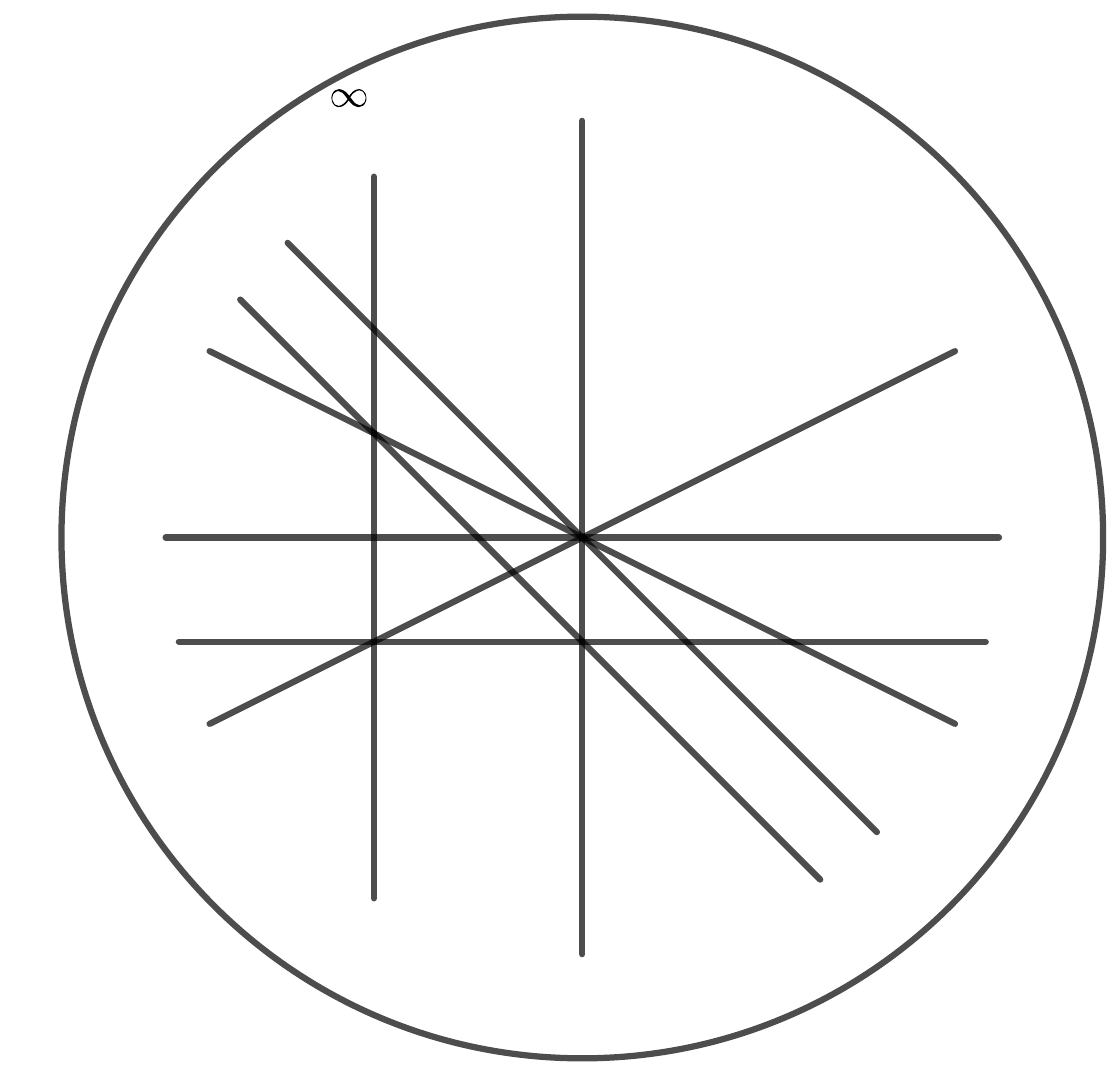}
        \caption{The arrangement $\mathcal{B}$.}
    \end{subfigure}
    \caption{Two projectivized pictures of the arrangements $\A$ and $\mathcal{B}$.}
    \label{fig:arrangements}
\end{figure}

\begin{cor} 
    If $M$ is a simple matroid of rank not larger than $3$, then $\mathcal{N}_M(Y, T)$
    has nonnegative coefficients and satisfies 
    \begin{align*} 
        \mathcal{N}_M(Y^{-1}, T^{-1}) &= Y^{r(M)}T^{r(M)-1}\mathcal{N}_M(Y, T).
    \end{align*}
    Moreover, the polynomial $\mathcal{N}_M(1, T)$ is real-rooted.
\end{cor}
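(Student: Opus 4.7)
The plan is to split by rank, using that $\mathcal{N}_M(Y,T)=\mathcal{N}_{\mathrm{sim}(M)}(Y,T)$ to reduce to the simple case. For $r(M)\leq 2$, the simplification is a uniform matroid and $\mathcal{N}_M(Y,T)$ can be written down straight from the definition: it is $1+Y$ when $r(M)=1$ and a degree-one polynomial in $T$ when $r(M)=2$. In both subranges, real-rootedness of $\mathcal{N}_M(1,T)$ is trivial (constant or linear in $T$), and nonnegativity and palindromicity are visible by inspection of the closed form. The substantive case is $r(M)=3$, which I would attack by feeding the explicit formula of Proposition~\ref{prop:rank-3},
$$\mathcal{N}_M(Y, T) \;=\; \pi_M(Y) + \phi_M(Y)T + Y^3\pi_M(Y^{-1})T^2,$$
into each of the three assertions separately.

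For nonnegativity, the coefficients of $\pi_M(Y)$ (and thus of its $Y$-reverse $Y^3\pi_M(Y^{-1})$) are nonnegative by Whitney positivity of the Poincar\'e polynomial on a geometric lattice. For $\phi_M(Y)$ one only needs to check the two distinct coefficients $n+c-2$ and $2s-n+c$. The first is nonnegative for any simple rank~$3$ matroid because $n\geq 3$ and $c\geq 1$. For the second, the key combinatorial input is that every element $e\in E$ lies in at least two rank~$2$ flats, since $\mathrm{sim}(M/e)$ is a rank~$2$ matroid and so has at least two parallel classes. Double-counting incidences then gives $s=\sum_{e} m_e\geq 2n$, whence $2s-n+c\geq 3n+c>0$.

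Palindromicity should fall out of the formula by inspection: $\phi_M(Y)$ is palindromic of degree~$3$ because its coefficient sequence $(n+c-2,\,2s-n+c,\,2s-n+c,\,n+c-2)$ is manifestly symmetric, and the $T^0$ and $T^2$ terms of $\mathcal{N}_M(Y,T)$ are $Y$-reverses of one another up to the factor $Y^3$. Multiplying the desired functional equation $\mathcal{N}_M(Y^{-1},T^{-1}) = Y^{-r}T^{-(r-1)}\mathcal{N}_M(Y,T)$ through by $Y^3T^2$ reduces it to exactly these two observations.

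For real-rootedness of $\mathcal{N}_M(1,T)$, I would evaluate to obtain a palindromic quadratic $aT^2+bT+a$ with $a=2(1+s-c)$ and $b=4(s+c-1)$, and verify $b^2\geq 4a^2$. The discriminant factors as $b^2-4a^2 = 64\,s(c-1)$, which is nonnegative since $s\geq 0$ and $c\geq 1$ for any rank~$3$ matroid. The main obstacle I expect is the nonnegativity of $\phi_M$, specifically the inequality $s\geq 2n$, which rests on the small structural observation about contractions above; once that bound is in hand, palindromicity and real-rootedness reduce to straightforward symbolic manipulation of the formula in Proposition~\ref{prop:rank-3}.
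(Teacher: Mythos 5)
Your proposal is correct and takes essentially the same route as the paper: reduce to the simple matroid, dispose of ranks $1$ and $2$ via the explicit closed forms, and in rank $3$ read the functional equation off Proposition~\ref{prop:rank-3} (palindromicity of $\phi_M$ plus the $Y$-reversal between the $T^0$ and $T^2$ coefficients) and real-rootedness from the discriminant $b^2-4a^2=64\,s(c-1)\geq 0$ of the palindromic quadratic at $Y=1$, which matches the paper's discriminant evaluated at $Y=1$. The only deviation is the nonnegativity of $\phi_M$: the paper invokes de Bruijn--Erd\H{o}s ($c\geq n$), whereas you use the elementary double count $s\geq 2n$ (every point lies on at least two lines); both suffice, and your version is slightly more self-contained.
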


\begin{proof} 
    This is clear if $r(M)=1$ as we assume that $M$ is a simple matroid. If
    $M$ has rank $2$, then $M\cong U_{2, n}$, where $n$ is the size of the
    ground set of $M$. Then, 
    \begin{align*} 
        \mathcal{N}_{U_{2, n}}(Y, T) &= (1+Y)(1+(n-1)Y) + (1+Y)(n-1+Y)T, 
    \end{align*}
    which satisfies the three properties. 
    
    If $r(M)=3$, then from Proposition~\ref{prop:rank-3}, $\mathcal{N}_M(Y, T)$
    satisfies 
    \begin{align*} 
        \mathcal{N}_M(Y^{-1}, T^{-1}) &= Y^{-3}T^{-2} \mathcal{N}_M(Y, T).
    \end{align*}
    The nonnegativity of the coefficients follows if $2s-n+c\geq
    0$. Since every element of the ground set is contained in some rank $2$
    flat, it follows that $2s-n\geq 0$.
    Thus, $\mathcal{N}_M(Y, T)$ has nonnegative coefficients. The discriminant
    of $\mathcal{N}_M(Y, T)$ as a polynomial in $T$ is
    \begin{align*} 
        \left((c+n)^2(1-Y)^2 - 4s(1 - (c+1)Y + Y^2)\right)(1 + Y)^4,
    \end{align*}
    which is positive at $Y=1$.
\end{proof}

\begin{lem}\label{lem:norm-N-rank3}
    For all matroids $M$ with rank $3$, 
    \begin{align*} 
        (1+T)^2 < \dfrac{\mathcal{N}_M(1, T)}{\pi_M(1)} < \EP{3}{B}(T) = 1 + 6T + T^2. 
    \end{align*} 
\end{lem}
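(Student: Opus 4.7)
My plan is to reduce the polynomial inequality to a single numerical comparison using the explicit formula from Proposition~\ref{prop:rank-3}. Since $\mathcal{N}_M(Y,T)$ depends only on $\LoF(M)$, I may assume $M$ is simple. Setting $Y=1$ in Proposition~\ref{prop:rank-3} and noting that $Y^3\pi_M(Y^{-1})$ specializes at $Y=1$ to $\pi_M(1)$ collapses the numerator to $\mathcal{N}_M(1,T)=\pi_M(1)(1+T^2)+\phi_M(1)T$. Using Lemma~\ref{lem:poincare-poly-rank3}, I will compute $\pi_M(1)=2(1+s-c)$, and from the explicit formula for $\phi_M$ in Proposition~\ref{prop:rank-3} I read off $\phi_M(1)=4(c+s-1)$. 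Dividing yields
$$\frac{\mathcal{N}_M(1,T)}{\pi_M(1)} \;=\; 1 + \beta\, T + T^2, \qquad \beta=\dfrac{2(c+s-1)}{1+s-c}.$$

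Next, I observe that both target polynomials $(1+T)^2$ and $\EP{3}{B}(T)=1+6T+T^2$ have constant term and $T^2$ coefficient equal to $1$, so the coefficient-wise strict inequality demanded by the statement reduces to a single condition on the middle coefficient, namely $2<\beta<6$. Clearing denominators, this becomes the pair of elementary inequalities $c>1$ and $s>2(c-1)$.

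To close the argument, I will verify both inequalities for every simple matroid $M$ of rank~$3$. Each rank~$2$ flat of a simple matroid contains at least two elements, so $s\geq 2c$; this gives the strict upper bound $s>2(c-1)$ and simultaneously shows $\pi_M(1)=2(1+s-c)>0$, so the division above is legitimate. For the lower bound, a simple matroid of rank~$3$ must contain at least two lines, since otherwise every element would lie on a single rank~$2$ flat, forcing $r(M)\leq 2$; hence $c\geq 2>1$. I do not foresee a substantive obstacle: Proposition~\ref{prop:rank-3} does the structural work, and once $\beta$ is written in closed form in $c$ and $s$, the inequality becomes a transparent calculation.
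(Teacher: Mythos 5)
Your proof is correct and follows essentially the same route as the paper: both specialize Proposition~\ref{prop:rank-3} at $Y=1$ to write the normalized numerator as $1+\beta T+T^2$ with $\beta = \tfrac{2(c+s-1)}{1+s-c} = 2+\tfrac{4(c-1)}{s-(c-1)}$, and then bound the middle coefficient using $s\geq 2c$. The only difference is cosmetic — you phrase the bound as $2<\beta<6$ rather than $0<\tfrac{4(c-1)}{s-(c-1)}<4$, and you spell out the (correct) justification that $c\geq 2$, which the paper leaves implicit.
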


\begin{proof}
    Without loss of generality, $M$ is a simple matroid. By
    Proposition~\ref{prop:rank-3}, 
    \begin{align} \label{eqn:rank3-normal}
        \dfrac{\mathcal{N}_M(1, T)}{\pi_M(1)} &= 1 + \left(2 + \dfrac{4(c-1)}{s - (c-1)}\right)T + T^2,
    \end{align} 
    where $c=|\LoF_2(M)|$ and $s=\sum_{X\in\LoF_2(M)}|X|$. Since $s\geq 2c$, 
    \begin{align*} 
        0 &< \dfrac{4(c-1)}{s - (c-1)} < 4. \qedhere
    \end{align*}
\end{proof}

We note that equation~\eqref{eqn:rank3-normal} together with
Theorem~\ref{thm:oriented-lower-bound} proves
Corollary~\ref{cor:non-orientability}.

\section{Oriented matroids}\label{sec:oriented}

Central to the proof of Theorem~\ref{thm:oriented-lower-bound} is the face
lattice of an oriented matroid $M=(E, \covecs)$. Recall from
Section~\ref{subsec:oriented-matroids} that $\covecs$ is the set of covectors,
$\FL(\covecs)$ the face lattice, $\topes(\covecs)$ the set of topes, and
$\zero:\covecs \rightarrow 2^E$ the zero map.

For a poset $P$, let $\widehat{\Delta}(P)$, resp.~$\widehat{\Delta}_k(P)$, be
the set of nonempty flags, resp.~flags of length $k$, ending at a maximal
element of $P$.

A key result that will be applied multiple times for our proof of
Theorem~\ref{thm:oriented-lower-bound} is the Las Vergnas--Zaslavsky Theorem. 
\begin{thm}[{\cite[Theorem 4.6.1]{OrientedMatroids}}] \label{thm:LVZ}
    Let $M=(E, \covecs)$ be an oriented matroid. Then
    \[ 
        |\topes(\covecs)| = \pi_{M}(1) . 
    \]
\end{thm}

\begin{lem}\label{lem:face-poset}
    Let $M=(E,\covecs)$ be an oriented matroid of rank $r$. Then for all
    $k\in [r]$,
    \begin{align*} 
        \left| \widehat{\Delta}_k(\propF(\covecs)) \right| &= \sum_{F\in\Delta_{k-1}(\propL(M))} \pi_F(1) . 
    \end{align*}
\end{lem}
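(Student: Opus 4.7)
The plan is to induct on $k \in [r]$. The structural input I would use is twofold: first, a bijection (via restriction to $E \setminus F$) between covectors $X \in \covecs$ with $\zero(X) = F$ and topes of the contraction $M/F$, yielding $|\zero^{-1}(F)| = \pi_{M/F}(1)$; second, a poset isomorphism between the upper interval $[X_1, \hat{1}_\covecs]$ in $\FL(\covecs)$ and the face lattice $\FL(\covecs_{M|F})$ of the restriction, valid for any covector $X_1$ with $\zero(X_1) = F$.

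The base case $k = 1$ is immediate: $\widehat{\Delta}_1(\propF(\covecs))$ is exactly the set of topes, and Zaslavsky's theorem gives $|\topes(\covecs)| = \pi_M(1) = \pi_\emptyset(1)$. For the inductive step $k \geq 2$, I would stratify a flag $X_1 < \cdots < X_k \in \widehat{\Delta}_k(\propF(\covecs))$ by its minimum $X_1$. Because $X_1$ is neither $\hat{0}_\covecs$ nor a tope, the flat $F := \zero(X_1)$ lies in $\propL(M)$. The interval isomorphism sends the tail $X_2 < \cdots < X_k$ to an element of $\widehat{\Delta}_{k-1}(\propF(\covecs_{M|F}))$, and grouping by $F$ while applying the covector count yields
\[
    |\widehat{\Delta}_k(\propF(\covecs))| = \sum_{F \in \propL(M)} \pi_{M/F}(1) \cdot |\widehat{\Delta}_{k-1}(\propF(\covecs_{M|F}))|.
\]
Invoking the inductive hypothesis on each $M|F$ would then produce a double sum over pairs $(F, G)$ with $F \in \propL(M)$ and $G \in \Delta_{k-2}(\propL(M|F))$. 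Each such pair encodes the unique flag $\tilde G = (G_1 < \cdots < G_{k-2} < F) \in \Delta_{k-1}(\propL(M))$ with maximum $F$. Since intervals in the lattice of flats are preserved under restriction, one has $\pi_{\tilde G}(1) = \pi_G^{M|F}(1) \cdot \pi_{M/F}(1)$, with the extra factor arising from the top segment $M/F | \hat{1}$ of $\tilde G$. This reindexes the double sum as $\sum_{\tilde G \in \Delta_{k-1}(\propL(M))} \pi_{\tilde G}(1)$, which is the desired right-hand side.

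The main obstacle will be verifying the two oriented-matroid structural facts. The covector count is clean: restriction to $E \setminus F$ identifies $\zero^{-1}(F)$ with the topes of $M/F$. The interval isomorphism is more delicate. The map $Y \mapsto Y|_F$ is injective because the entries of $Y > X_1$ outside $F$ are forced to agree with $X_1$, but for surjectivity I would invoke the composition axiom: for any target covector $\sigma$ of $M|F$ and any lift $\sigma^* \in \covecs$, the covector $Y := X_1 \circ \sigma^*$ satisfies $Y > X_1$ and $Y|_F = \sigma$. Once these two ingredients are secured, the induction reduces to the bookkeeping above.
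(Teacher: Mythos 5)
Your proposal is correct and follows essentially the same route as the paper: induction on flag length with the Las Vergnas--Zaslavsky theorem as the base case and for counting covectors with prescribed zero set, plus the decomposition of a flag ending at a tope into its minimal face and a flag in the restriction $\covecs|F$, followed by the same reindexing $\pi_{\tilde G}(1)=\pi_{G}(1)\pi_{M/F}(1)$. The only cosmetic difference is that you establish the interval isomorphism $[X_1,\hat{1}_{\covecs}]\cong\FL(\covecs|F)$ abstractly via the composition axiom, whereas the paper writes down the lifted sign vectors explicitly.
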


\begin{proof} 
    We prove this by induction on $k$, where the case $k=1$ is
    Theorem~\ref{thm:LVZ}, 
    so we assume it holds for some $k\geq 1$. 
    
    For $X\in\propL(M)$, the matroids $M|X$ and $M/X$ are orientable. The set of
    covectors of $M/X$ is $\covecs/X := \left\{C|_{E\setminus X} : C\in\covecs,
    X\subseteq \zero(C)\right\}$, and the set of covectors of $M|X$ is
    $\covecs|X := \left\{C|_X : C\in\covecs\right\}$. Then by induction 
    \begin{equation}\label{eqn:face-induction}
    \begin{split}
        \sum_{F\in\Delta_k(\propL(M))} \pi_F(1) &= \sum_{X\in \propL(M)}~\sum_{F'\in \Delta_{k-1}(\propL(M|X))} \pi_{M/X}(1)  \pi_{F'}(1) \\
        &= \sum_{X\in \propL(M)} \pi_{M/X}(1) \left| \widehat{\Delta}_k(\propF(\covecs|X)) \right|  \\
        &= \sum_{X\in \propL(M)} \left|\topes(\covecs/X) \right| \left| \widehat{\Delta}_k(\propF(\covecs|X)) \right|.
    \end{split}
    \end{equation}
    The last equation in~\eqref{eqn:face-induction} follows from Theorem~\ref{thm:LVZ}.
    
    Fix $X\in \propF(M)$. The set of topes $\topes(\covecs/X)$ is canonically in
    bijection with the set $\{C \in \covecs : \zero(C) = X\}$. Hence, the set
    $\topes(\covecs/X)\times \widehat{\Delta}_k(\propF(\covecs|X))$ determines a
    flag in $\widehat{\Delta}_{k+1}(\propF(\covecs))$ beginning with a face
    whose zero set is $X$. More precisely, for $C\in\topes(\covecs/X)$ and $F =
    (C_1 < \cdots < C_k) \in \widehat{\Delta}_k(\propF(\covecs|X))$, we define a
    flag $(C' < C_1' < \cdots < C_k')\in
    \widehat{\Delta}_{k+1}(\propF(\covecs))$ such that 
    \begin{align*} 
        C_e' &= \begin{cases} 
            C_e & e\in E\setminus X, \\
            0 & e\in X,
        \end{cases} 
    \end{align*} 
    and for $i\in [k]$, 
    \begin{align*} 
        (C_i')_e &= \begin{cases} 
            C_e & e\in E\setminus X, \\
            (C_i)_e & e\in X.
        \end{cases} 
    \end{align*} 

    Lastly, the number of flags in
    $\widehat{\Delta}_{k+1}(\propF(\covecs))$ beginning with a
    face whose zero set is $X\in\propL(M)$ is $\left|\topes(\covecs/X) \right|
    \left| \widehat{\Delta}_k(\propF(\covecs|X)) \right|$.
    Hence, by~\eqref{eqn:face-induction}, the lemma holds.
\end{proof}

For a finite simplicial complex $\Sigma$, we write $f(\Sigma) := (f_0, \dots,
f_d)\in \N_0^{d+1}$ for the \emph{$f$-vector} of $\Sigma$, where $f_k$ is the
number of $k$-subsets in $\Sigma$---equivalently, the number of
$(k-1)$-dimensional faces. Let $f(\Sigma; T) = \sum_{k=0}^{d} f_k T^k$ be the
$f$-polynomial of $\Sigma$, and let $h(\Sigma; T) := (1 - T)^d f(\Sigma;
T/(1-T))$, which is the \emph{$h$-polynomial} associated to $\Sigma$. The
coefficients of $h(\Sigma; T)$ yield the \emph{$h$-vector} $h(\Sigma)$ of
$\Sigma$.

For a tope $\tau\in\topes(\covecs)$, we define a simplicial complex
$\Sigma(\tau) := \Delta((\hat{0}_{\covecs}, \tau))$, which is the set of flags
in the open interval $(\hat{0}_{\covecs}, \tau)$ in $\mathcal{F}(\covecs)$
ordered by refinement. We write $\Sigma_k(\tau)$ for the flags of $\Sigma(\tau)$
with length $k$. If $M$ is realizable over $\mathbb{R}$, then
$\Sigma(\tau)$ is the barycentric subdivision of the boundary of the chamber
determined by $\tau$. 

\begin{prop}\label{prop:h-vectors}
    Let $M=(E, \covecs)$ be an oriented matroid of rank $r$. Then 
    \begin{align*} 
       \mathcal{N}_M(1, T) &= \sum_{\tau\in\topes(\covecs)} h(\Sigma(\tau); T). 
    \end{align*}
\end{prop}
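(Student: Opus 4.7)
The plan is to expand both sides in terms of flag counts and match them via Lemma~\ref{lem:face-poset}.

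First I would observe that $\FL(\covecs)$ is a graded lattice in which every tope $\tau$ has rank $r$; hence the open interval $(\hat{0}_{\covecs}, \tau)$ has rank $r-1$, and $\Sigma(\tau)$ is a simplicial complex satisfying $f_k(\Sigma(\tau)) = |\Sigma_k(\tau)|$ for $k \in \{0, 1, \ldots, r-1\}$. Using the definition $h(\Sigma(\tau); T) = (1-T)^{r-1} f(\Sigma(\tau); T/(1-T))$ and swapping summations gives
\begin{align*}
\sum_{\tau \in \topes(\covecs)} h(\Sigma(\tau); T)
= \sum_{k=0}^{r-1} \left( \sum_{\tau \in \topes(\covecs)} |\Sigma_k(\tau)| \right) T^k (1-T)^{r-1-k}.
\end{align*}

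The key step is a combinatorial identification of the inner sum. A pair consisting of a tope $\tau$ and a flag $F = (C_1 < \cdots < C_k) \in \Sigma_k(\tau)$ corresponds bijectively to the flag $(C_1 < \cdots < C_k < \tau) \in \widehat{\Delta}_{k+1}(\propF(\covecs))$, since topes are maximal in $\propF(\covecs)$ and membership in $\Sigma_k(\tau)$ is exactly the condition $C_k < \tau$ in $\FL(\covecs)$. Therefore
\begin{align*}
\sum_{\tau \in \topes(\covecs)} |\Sigma_k(\tau)| = \left| \widehat{\Delta}_{k+1}(\propF(\covecs)) \right|,
\end{align*}
and Lemma~\ref{lem:face-poset} rewrites this as $\sum_{F \in \Delta_k(\propL(M))} \pi_F(1)$.

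Substituting back, the right-hand side of the proposition becomes
\begin{align*}
\sum_{\tau \in \topes(\covecs)} h(\Sigma(\tau); T)
= \sum_{k=0}^{r-1} \sum_{F \in \Delta_k(\propL(M))} \pi_F(1) \cdot T^k (1-T)^{r-1-k},
\end{align*}
which is precisely $\mathcal{N}_M(1, T)$ by grouping the defining sum of the coarse flag polynomial according to $|F| = k$ and setting $Y=1$. The main obstacle is conceptual rather than technical: one must recognize that aggregating the tope-local $h$-polynomials amounts to reorganizing pairs (tope, flag beneath it) as flags ending at a tope, which is exactly the quantity controlled by Lemma~\ref{lem:face-poset}. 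Once this bijection is made explicit, the identity reduces to a formal matching of the two flag-indexed sums.
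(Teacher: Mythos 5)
Your proof is correct and follows essentially the same route as the paper: expand each $h(\Sigma(\tau);T)$ through its $f$-vector, identify the pairs (tope, flag strictly below it) with the flags in $\widehat{\Delta}_{k+1}(\propF(\covecs))$ ending at a tope, and then invoke Lemma~\ref{lem:face-poset}. Your bookkeeping, pairing $\sum_{F\in\Delta_k(\propL(M))}\pi_F(1)$ with $T^k(1-T)^{r-1-k}$, is exactly what the definition of $\mathcal{N}_M(Y,T)$ requires, so no changes are needed.
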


\begin{proof}
    The flags in $\widehat{\Delta}(\propF(\covecs))$ are
    partitioned into subsets $\widehat{\Delta}((\hat{0}_{\covecs},\tau])$ for
    $\tau\in\topes(\covecs)$, and the latter are in bijection with the flags in
    $\Sigma(\tau)$. Thus, for each $k\in [r-1]_0$, 
    \begin{align*} 
        \left| \widehat{\Delta}_{k+1}(\propF(\covecs)) \right| &= \sum_{\tau\in\topes(\covecs)} \left| \Sigma_k(\tau)\right| . 
    \end{align*}
    Applying Lemma~\ref{lem:face-poset}, we have 
    \begin{align*} 
        \sum_{\tau\in\topes(\covecs)} h(\Sigma(\tau); T) &= \sum_{k=0}^{r-1}\sum_{\tau\in\topes(\covecs)} \left| \Sigma_k(\tau)\right| T^k(1-T)^{r-k-1} \\
        &= \sum_{k=0}^{r-1}~\sum_{F\in \Delta_k(\propL(M))} \pi_F(1) T^k(1-T)^{r-k-1} = \mathcal{N}_M(1, T).\qedhere
    \end{align*}
\end{proof}

In order to prove the lower bound in Theorem~\ref{thm:oriented-lower-bound}, we
work with the cd-index of an (Eulerian) poset. Details can be found
in~\cite[Ch.~3.17]{Stanley:Vol1}.

Let $P$ be a graded poset of rank $n$ with rank function $r : P \rightarrow
[n]_0$. For $S\subseteq [n]_0$, let $P_S=\{x\in P ~|~ r(x)\in S\}$. Set
$\alpha_P(S)$ to be the number of maximal flags in $P_S$, and let
\begin{align*} 
    \beta_P(S) = \sum_{U\subseteq S} (-1)^{|S\setminus U|}\alpha_P(U). 
\end{align*}
Let $a$ and $b$ be two noncommuting variables. For a subset $S\subseteq
\left[n\right]_0$ we define a monomial $u_S$ by setting $u_S=e_0e_1\dots e_n$,
where
\[
e_i=
\begin{cases}
a, &\mbox{if }i\not \in S,\\
b, &\mbox{if }i \in S.
\end{cases}
\]
Using these monomials we can define the \emph{$ab$-index} $	\Psi_P(a,b)$ of the
graded poset $P$ which is the noncommutative polynomial
\[
	\Psi_P(a,b)=\sum_{S\subseteq \left[n\right]_0} \beta_P(S)u_S.
\]
If $P$ is an Eulerian poset, that is every interval in $P$ has an equal number
of elements of even and odd rank, there exists a polynomial $\Phi_P(c,d)$ in the
noncommuting variables $c,d$ such that
\[
	\Psi_P(a,b)=\Phi_P(a+b,ab+ba).
\]
The polynomial $\Phi_P(c,d)$ is called the \emph{$cd$-index} of the Eulerian
poset $P$. For an overview about the $cd$-index see \cite{Bay:cdSurvey}.

If $M=(E,\covecs)$ is an oriented matroid of rank $r$, then for $\tau\in\topes(\covecs)$ and $k\in [r-1]$,
\begin{align}\label{eqn:flag-h-tope}
h_k(\Sigma(\tau)) = \sum_{\substack{S\subseteq [r-1] \\ |S|=k}} \beta_{[\hat{0}_{\covecs}, \tau]}(S).
\end{align}
Therefore, since $[\hat{0}_{\covecs}, \tau]$ is
Eulerian~\cite[Cor.~4.3.8]{OrientedMatroids}, we have 
\begin{equation}\label{eq:cdindex}
	h(\Sigma(\tau);T)  = \Psi_{[\hat{0}_{\covecs}, \tau]}(1,T)=\Phi_{[\hat{0}_{\covecs}, \tau]}(1+T,2T),
\end{equation}
so the $h$-polynomial can be viewed as a coarsening of the cd-index.

\begin{prop}\label{prop:h-lower-bound} 
    Let $M=(E,\covecs)$ be an oriented matroid of rank $r$. Then for all
    $\tau\in\topes(\covecs)$, 
    \begin{align*} 
       \EP{r}{A}(T) &\leq  h(\Sigma(\tau); T),
    \end{align*}
    and equality holds if and only if $\tau$ is a simplicial tope.
\end{prop}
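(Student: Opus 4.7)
The plan is to go through the $cd$-index of the interval $[\hat{0}_{\covecs}, \tau]$ and invoke the Billera--Ehrenborg inequality for Gorenstein* posets. By equation~\eqref{eq:cdindex} we already have
\[
h(\Sigma(\tau); T) = \Phi_{[\hat{0}_{\covecs}, \tau]}(1 + T, 2T),
\]
so any coefficient-wise lower bound on the $cd$-index transfers to a coefficient-wise lower bound on $h(\Sigma(\tau); T)$, since both $1 + T$ and $2T$ have nonnegative coefficients in $T$.

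First, I would verify that $[\hat{0}_{\covecs}, \tau]$ is a Gorenstein* poset of rank $r$. This follows from the Folkman--Lawrence topological representation theorem: the face lattice of an oriented matroid is realized as the face lattice of a regular CW decomposition of a PL sphere, so the interval below a tope is the face poset of a PL ball, which is in particular Eulerian and Gorenstein*. Hence the $cd$-index $\Phi_{[\hat{0}_{\covecs}, \tau]}(c,d)$ is well-defined and equation~\eqref{eq:cdindex} applies.

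Next, I would apply the Billera--Ehrenborg theorem, as extended by Karu to general Gorenstein* posets: for any such poset $P$ of rank $r$, the $cd$-index satisfies $\Phi_P(c,d) \geq \Phi_{B_r}(c,d)$ coefficient-wise, with equality precisely when $P \cong B_r$. Applied to $P = [\hat{0}_{\covecs}, \tau]$ with $c = 1 + T$ and $d = 2T$, this yields $h(\Sigma(\tau); T) \geq \Phi_{B_r}(1+T, 2T)$. For $P = B_r$, the order complex $\Delta(\propL(B_r))$ is the barycentric subdivision of $\partial \Delta^{r-1}$, whose $h$-polynomial equals $\EP{r}{A}(T)$ by the Petersen theorem cited in the introduction. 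Therefore $\Phi_{B_r}(1+T, 2T) = \EP{r}{A}(T)$, and equality in the bound corresponds exactly to the Boolean case $[\hat{0}_{\covecs}, \tau] \cong B_r$, that is, to $\tau$ being simplicial.

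The main obstacle is verifying the Gorenstein* hypothesis for arbitrary, not necessarily realizable, oriented matroids and handling the equality case cleanly; the former rests on Folkman--Lawrence to supply the PL sphere structure on the face lattice, while the latter uses the sharpness statement in Karu's theorem (alternatively, one can argue directly that equality of the $h$-polynomials forces equality of flag $f$-vectors, and then conclude that $[\hat{0}_{\covecs}, \tau]$ must be Boolean).
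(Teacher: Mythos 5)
Your inequality half is essentially the paper's own argument: establish that $[\hat{0}_{\covecs},\tau]$ is a Gorenstein* lattice of rank $r$, apply the Ehrenborg--Karu coefficient-wise bound $\Phi_{B_r}(c,d)\leq\Phi_{[\hat{0}_{\covecs},\tau]}(c,d)$, and specialize $c=1+T$, $d=2T$ via~\eqref{eq:cdindex}, using $\Phi_{B_r}(1+T,2T)=\EP{r}{A}(T)$. The paper justifies the Gorenstein* hypothesis by citing \cite[Cor.~4.3.7 \&~4.3.8]{OrientedMatroids} (Cohen--Macaulay and Eulerian), which ultimately rests on the same topological representation theorem you invoke; one small imprecision in your phrasing is that the relevant topological fact is that the proper part $(\hat{0}_{\covecs},\tau)$ is the \emph{boundary} of the tope, a PL $(r-2)$-sphere --- the closed tope itself is a ball, and the face poset of a ball (with an artificial top) is not Gorenstein*.

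The gap is in the direction ``equality implies $\tau$ simplicial.'' You quote a sharpness statement ``$\Phi_P=\Phi_{B_r}$ precisely when $P\cong B_r$'' for Gorenstein* posets, but the cited result (Ehrenborg--Karu, the source of~\eqref{eqn:cd-index}) asserts only the coefficient-wise lower bound, not a characterization of equality, so this cannot simply be invoked. Your fallback sketch is also incomplete at exactly the two nontrivial points. First, equality of $h$-polynomials only upgrades to equality of cd-indices (hence of flag $f$-vectors) after observing that every cd-monomial evaluates at $(1+T,2T)$ to a nonzero polynomial with nonnegative coefficients, so a coefficient-wise strictly larger cd-index would force a strictly larger $h$-polynomial; this is easy but must be said. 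Second, ``same flag $f$-vector as $B_r$ implies Boolean'' is not automatic for an abstract graded lattice and is precisely the content that needs proof. The paper circumvents both issues with a short direct argument: equality forces $h_1(\Sigma(\tau))=2^r-r-1$, i.e.\ $\sum_{i=1}^{r-1}\alpha_{[\hat{0}_{\covecs},\tau]}(\{i\})=2^r-2$; since $\alpha_{[\hat{0}_{\covecs},\tau]}(\{i\})\geq\binom{r}{i}$ for tope face lattices \cite[Exercise~4.4(b)]{OrientedMatroids}, every bound is attained, in particular $\alpha_{[\hat{0}_{\covecs},\tau]}(\{r-1\})=r$, and a tope with exactly $r$ facets is simplicial by \cite[Exercise~4.4(c)]{OrientedMatroids}. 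To complete your proof you should either prove the uniqueness statement for Gorenstein* lattices you attribute to Karu, or replace it with a facet-counting argument of this kind specific to tope face lattices.
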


\begin{proof}
	A lattice $P$ of rank $r$ is Gorenstein* if it is Cohen--Macaulay and
	Eulerian. Ehrenborg and Karu~\cite[Cor.~1.3]{EK:cdindex} proved that such a
	lattice $P$ satisfies
	\begin{equation}\label{eqn:cd-index}
	\Phi_{B_r}(c,d)\le \Phi_P(c,d),
    \end{equation}
	where $B_r$ is the Boolean lattice of rank $r$. If $\tau\in\topes(\covecs)$,
	then the interval $[\hat{0}_{\covecs},\tau]$ is both Cohen--Macaulay and
	Eulerian as shown in~\cite[Cor.~4.3.7 \&~4.3.8]{OrientedMatroids}. Thus,
	using \eqref{eq:cdindex} we obtain, by substituting $c=1+T$ and $d=2T$
	in~\eqref{eqn:cd-index}, $\Phi_{B_r}(1+T,2T)  \le h(\Sigma(\tau); T)$. The
	claimed inequality thus follows from $\Phi_{B_r}(1+T,2T)=\EP{r}{A}(T)$ as
	this is the $h$-polynomial of the barycentric subdivision of the
	$r$-dimensional simplex.
	
	If $\tau$ is a simplicial tope, then $[\hat{0}_{\covecs},\tau]$ is a Boolean
	lattice by definition and, thus, $\EP{r}{A}(T) =  h(\Sigma(\tau); T)$.
	On the other hand, suppose we have $\EP{r}{A}(T) =  h(\Sigma(\tau);
	T)$ for the tope $\tau\in\topes(\covecs)$. This implies
	\[
		h_1(\Sigma(\tau)) = 2^r-r-1.
	\]
	By definition we have
	\begin{align*}
		h_1(\Sigma(\tau)) &=\sum_{i=1}^{r-1} \left(\alpha_{[\hat{0}_{\covecs},\tau]} (\{ i \}) -1\right) = \sum_{i=1}^{r-1} \alpha_{[\hat{0}_{\covecs},\tau]} (\{ i \}) -r+1.
	\end{align*}
	 By~\cite[Exercise 4.4 (b)]{OrientedMatroids} we have $\alpha_{[\hat{0}_{\covecs},\tau]} (\{ i \})\ge \binom{r}{i}$.
	Altogether this yields
	\[
		\sum_{i=1}^{r-1} \binom{r}{i} = 2^r-2 = \sum_{i=1}^{r-1} \alpha_{[\hat{0}_{\covecs},\tau]} (\{ i \}) \ge \sum_{i=1}^{r-1}\binom{r}{i}.
	\]
	Thus we obtain $\alpha_{[\hat{0}_{\covecs},\tau]} (\{ r-1 \})=r$ which by~\cite[Exercise 4.4 (c)]{OrientedMatroids} implies that the tope~$\tau$ is simplicial.
\end{proof}

\begin{proof}[Proof of Theorem~\ref{thm:oriented-lower-bound}]
    The proof of the first statement follows from
    Theorem~\ref{thm:LVZ} and Propositions~\ref{prop:h-vectors}
    and~\ref{prop:h-lower-bound}. Since $\beta_{[\hat{0}_{\covecs}, \tau]}(S) =
    \beta_{[\hat{0}_{\covecs}, \tau]}([r-1]\setminus S)$
    by~\cite[Corollary~3.16.6]{Stanley:Vol1}, it follows from
    Equation~\eqref{eqn:flag-h-tope} that $h_k(\Sigma(\tau)) =
    h_{r-k-1}(\Sigma(\tau))$. Hence, the second statement follows by
    Proposition~\ref{prop:h-vectors}. 
\end{proof}

\subsection{Examples}

We compute $\mathcal{N}_M(1, T)$ for some oriented matroids $M$. 

\subsubsection{A uniform matroid with rank $3$}

Consider the matroid $M=U_{3, 4}$. One set of covectors is defined by the real
arrangement given by $xyz(x+y+z)0$. There
are $14=2^4-2$ topes since $(+++-)$ and $(---+)$ are not topes. For instance,
the inequality system given by $x>0,y>0,z>0$ and $x+y+z<0$ is infeasible. The
topes with an even number of $+$ symbols correspond to triangles, and the topes with an
odd number of $+$ symbols correspond to squares. Therefore, there are $8$ triangles and
$6$ squares, so by Proposition~\ref{prop:h-vectors}, 
\begin{align*} 
    \mathcal{N}_M(1, T) &= 8(1 + 4T + T^2) + 6(1 + 6T + T^2) = 14 + 68T + 14T^2.
\end{align*}
By Proposition~\ref{prop:rank-3}, the coarse numerator for $M$ is given by 
\begin{align*} 
    \mathcal{N}_M(Y, T) &= 1 + 4Y + 6Y^2 + 3Y^3 + (8 + 26Y + 26Y^2 + 8Y^3)T \\
    &\qquad + (3 + 6Y + 4Y^2 + Y^3)T^2.
\end{align*}

\subsubsection{A uniform matroid with rank $4$}

Three uniform matroids, which are not isomorphic to the matroids underlying Coxeter
arrangements, in~\cite{MV:fHPseries} had the seemingly rare property that
$\mathcal{N}_M(1, T)/\pi_M(1) \in \Z[T]$. These are the uniform matroids
$U_{r,n}$ for
\[
(r,n)\in \{(4, 5), (4,7), (4,8)\},
\]
and we consider $(r,n)=(4,7)$. From Proposition~\ref{prop:h-vectors}, this integrality
condition is equivalent to the integrality of the average of the $h$-vectors. To
do this computation, we used the hyperplane arrangement
package~\cite{KastnerPanizzut:polymake} of \textsf{polymake} version
4.4~\cite{polymake}.

The matroid $U_{4, 7}$ can be realized as a hyperplane arrangement in
$\mathbb{R}^4$, whose hyperplanes are given by
\[
x_1x_2x_3x_4(x_1+x_2+x_3+x_4)(x_1+2x_2+3x_3+4x_4)(x_1+3x_2+2x_2+5x_4)=0.
\]
There are five different polytopes corresponding to chambers of this arrangement, and they can be seen in Figure~\ref{fig:polys-U47}.
The chambers are $4$-dimensional cones over these polytopes.

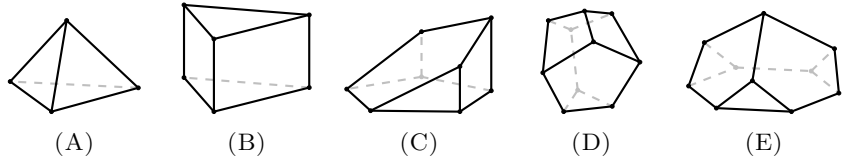
\begin{figure}[h]
    \centering 
    \begin{subfigure}[b]{0.17\textwidth}
        \centering
        \begin{tikzpicture}%
            [x={(-0.979853cm, 0.048658cm)},
            y={(-0.199719cm, -0.238383cm)},
            z={(0.000070cm, 0.969952cm)},
            scale=1.000000,
            back/.style={color=black, dashed, thick},
            edge/.style={color=black, thick},
            facet/.style={fill=white,fill opacity=0.75},
            vertex/.style={inner sep=0.5pt,circle,draw=black,fill=black}]
            %
            %
            
            \coordinate (-0.86000, -0.50000, 0.00000) at (-0.86000, -0.50000, 0.00000);
            \coordinate (0.00000, 0.00000, 1.00000) at (0.00000, 0.00000, 1.00000);
            \coordinate (0.00000, 1.00000, 0.00000) at (0.00000, 1.00000, 0.00000);
            \coordinate (0.86000, -0.50000, 0.00000) at (0.86000, -0.50000, 0.00000);
            \draw[edge,back] (-0.86000, -0.50000, 0.00000) -- (0.86000, -0.50000, 0.00000);
            \fill[facet] (0.00000, 1.00000, 0.00000) -- (-0.86000, -0.50000, 0.00000) -- (0.00000, 0.00000, 1.00000) -- cycle {};
            \fill[facet] (0.86000, -0.50000, 0.00000) -- (0.00000, 0.00000, 1.00000) -- (0.00000, 1.00000, 0.00000) -- cycle {};
            \draw[edge] (-0.86000, -0.50000, 0.00000) -- (0.00000, 0.00000, 1.00000);
            \draw[edge] (-0.86000, -0.50000, 0.00000) -- (0.00000, 1.00000, 0.00000);
            \draw[edge] (0.00000, 0.00000, 1.00000) -- (0.00000, 1.00000, 0.00000);
            \draw[edge] (0.00000, 0.00000, 1.00000) -- (0.86000, -0.50000, 0.00000);
            \draw[edge] (0.00000, 1.00000, 0.00000) -- (0.86000, -0.50000, 0.00000);
            \node[vertex] at (-0.86000, -0.50000, 0.00000)     {};
            \node[vertex] at (0.00000, 0.00000, 1.00000)     {};
            \node[vertex] at (0.00000, 1.00000, 0.00000)     {};
            \node[vertex] at (0.86000, -0.50000, 0.00000)     {};
        \end{tikzpicture}
        \caption{}
        \label{fig:U47-simplex}
    \end{subfigure}
    \begin{subfigure}[b]{0.17\textwidth}
        \centering 
        \begin{tikzpicture}%
            [x={(-0.957953cm, 0.076953cm)},
            y={(-0.286926cm, -0.256983cm)},
            z={(-0.000018cm, 0.963347cm)},
            scale=1.000000,
            back/.style={color=black, dashed, thick},
            edge/.style={color=black, thick},
            facet/.style={fill=white,fill opacity=0.75},
            vertex/.style={inner sep=0.5pt,circle,draw=black,fill=black}]
            %
            %
            
            \coordinate (-0.86000, -0.50000, 0.00000) at (-0.86000, -0.50000, 0.00000);
            \coordinate (-0.86000, -0.50000, 1.00000) at (-0.86000, -0.50000, 1.00000);
            \coordinate (0.00000, 1.00000, 0.00000) at (0.00000, 1.00000, 0.00000);
            \coordinate (0.00000, 1.00000, 1.00000) at (0.00000, 1.00000, 1.00000);
            \coordinate (0.86000, -0.50000, 0.00000) at (0.86000, -0.50000, 0.00000);
            \coordinate (0.86000, -0.50000, 1.00000) at (0.86000, -0.50000, 1.00000);
            \draw[edge,back] (-0.86000, -0.50000, 0.00000) -- (0.86000, -0.50000, 0.00000);
            \fill[facet] (0.86000, -0.50000, 1.00000) -- (0.00000, 1.00000, 1.00000) -- (0.00000, 1.00000, 0.00000) -- (0.86000, -0.50000, 0.00000) -- cycle {};
            \fill[facet] (0.00000, 1.00000, 1.00000) -- (-0.86000, -0.50000, 1.00000) -- (-0.86000, -0.50000, 0.00000) -- (0.00000, 1.00000, 0.00000) -- cycle {};
            \fill[facet] (0.86000, -0.50000, 1.00000) -- (-0.86000, -0.50000, 1.00000) -- (0.00000, 1.00000, 1.00000) -- cycle {};
            \draw[edge] (-0.86000, -0.50000, 0.00000) -- (-0.86000, -0.50000, 1.00000);
            \draw[edge] (-0.86000, -0.50000, 0.00000) -- (0.00000, 1.00000, 0.00000);
            \draw[edge] (-0.86000, -0.50000, 1.00000) -- (0.00000, 1.00000, 1.00000);
            \draw[edge] (-0.86000, -0.50000, 1.00000) -- (0.86000, -0.50000, 1.00000);
            \draw[edge] (0.00000, 1.00000, 0.00000) -- (0.00000, 1.00000, 1.00000);
            \draw[edge] (0.00000, 1.00000, 0.00000) -- (0.86000, -0.50000, 0.00000);
            \draw[edge] (0.00000, 1.00000, 1.00000) -- (0.86000, -0.50000, 1.00000);
            \draw[edge] (0.86000, -0.50000, 0.00000) -- (0.86000, -0.50000, 1.00000);
            \node[vertex] at (-0.86000, -0.50000, 0.00000)     {};
            \node[vertex] at (-0.86000, -0.50000, 1.00000)     {};
            \node[vertex] at (0.00000, 1.00000, 0.00000)     {};
            \node[vertex] at (0.00000, 1.00000, 1.00000)     {};
            \node[vertex] at (0.86000, -0.50000, 0.00000)     {};
            \node[vertex] at (0.86000, -0.50000, 1.00000)     {};
        \end{tikzpicture}
        \caption{}
        \label{fig:U47-prism}
    \end{subfigure}
    \begin{subfigure}[b]{0.17\textwidth} 
        \centering 
        \begin{tikzpicture}%
            [x={(0.964008cm, 0.066984cm)},
            y={(-0.265872cm, 0.243004cm)},
            z={(0.000036cm, 0.967710cm)},
            scale=1.000000,
            back/.style={color=black, dashed, thick},
            edge/.style={color=black, thick},
            facet/.style={fill=white,fill opacity=0.75},
            vertex/.style={inner sep=0.5pt,circle,draw=black,fill=black}]
            %
            %
            
            \coordinate (1.00000, 0.00000, 0.00000) at (1.00000, 0.00000, 0.00000);
            \coordinate (0.31000, 0.95000, 0.00000) at (0.31000, 0.95000, 0.00000);
            \coordinate (-0.81000, 0.59000, 0.00000) at (-0.81000, 0.59000, 0.00000);
            \coordinate (-0.81000, -0.59000, 0.00000) at (-0.81000, -0.59000, 0.00000);
            \coordinate (0.31000, -0.95000, 0.00000) at (0.31000, -0.95000, 0.00000);
            \coordinate (1.00000, 0.00000, 1.00000) at (1.00000, 0.00000, 1.00000);
            \coordinate (0.31000, 0.95000, 0.61878) at (0.31000, 0.95000, 0.61878);
            \coordinate (0.31000, -0.95000, 0.61878) at (0.31000, -0.95000, 0.61878);
            \draw[edge,back] (1.00000, 0.00000, 0.00000) -- (0.31000, 0.95000, 0.00000);
            \draw[edge,back] (0.31000, 0.95000, 0.00000) -- (-0.81000, 0.59000, 0.00000);
            \draw[edge,back] (0.31000, 0.95000, 0.00000) -- (0.31000, 0.95000, 0.61878);
            \node[vertex] at (0.31000, 0.95000, 0.00000)     {};
            \fill[facet] (0.31000, -0.95000, 0.61878) -- (-0.81000, -0.59000, 0.00000) -- (0.31000, -0.95000, 0.00000) -- cycle {};
            \fill[facet] (1.00000, 0.00000, 1.00000) -- (0.31000, -0.95000, 0.61878) -- (-0.81000, -0.59000, 0.00000) -- (-0.81000, 0.59000, 0.00000) -- (0.31000, 0.95000, 0.61878) -- cycle {};
            \fill[facet] (0.31000, -0.95000, 0.61878) -- (0.31000, -0.95000, 0.00000) -- (1.00000, 0.00000, 0.00000) -- (1.00000, 0.00000, 1.00000) -- cycle {};
            \draw[edge] (1.00000, 0.00000, 0.00000) -- (0.31000, -0.95000, 0.00000);
            \draw[edge] (1.00000, 0.00000, 0.00000) -- (1.00000, 0.00000, 1.00000);
            \draw[edge] (-0.81000, 0.59000, 0.00000) -- (-0.81000, -0.59000, 0.00000);
            \draw[edge] (-0.81000, 0.59000, 0.00000) -- (0.31000, 0.95000, 0.61878);
            \draw[edge] (-0.81000, -0.59000, 0.00000) -- (0.31000, -0.95000, 0.00000);
            \draw[edge] (-0.81000, -0.59000, 0.00000) -- (0.31000, -0.95000, 0.61878);
            \draw[edge] (0.31000, -0.95000, 0.00000) -- (0.31000, -0.95000, 0.61878);
            \draw[edge] (1.00000, 0.00000, 1.00000) -- (0.31000, 0.95000, 0.61878);
            \draw[edge] (1.00000, 0.00000, 1.00000) -- (0.31000, -0.95000, 0.61878);
            \node[vertex] at (1.00000, 0.00000, 0.00000)     {};
            \node[vertex] at (-0.81000, 0.59000, 0.00000)     {};
            \node[vertex] at (-0.81000, -0.59000, 0.00000)     {};
            \node[vertex] at (0.31000, -0.95000, 0.00000)     {};
            \node[vertex] at (1.00000, 0.00000, 1.00000)     {};
            \node[vertex] at (0.31000, 0.95000, 0.61878)     {};
            \node[vertex] at (0.31000, -0.95000, 0.61878)     {};
        \end{tikzpicture}
        \caption{}
        \label{fig:U47-pinched}
    \end{subfigure} 
    \begin{subfigure}[b]{0.17\textwidth} 
        \centering 
        \begin{tikzpicture}%
            [x={(-0.723200cm, -0.312625cm)},
            y={(0.690639cm, -0.327354cm)},
            z={(-0.000008cm, 0.891687cm)}, 
            scale=1.15,
            back/.style={color=black, dashed, thick},
            edge/.style={color=black, thick},
            facet/.style={fill=white,fill opacity=0.75},
            vertex/.style={inner sep=0.5pt,circle,draw=black,fill=black}]
            %
            %
            
            \coordinate (0.33000, 0.00000, 0.00000) at (0.33000, 0.00000, 0.00000);
            \coordinate (-0.16500, -0.28579, 0.00000) at (-0.16500, -0.28579, 0.00000);
            \coordinate (-0.16500, 0.28579, 0.00000) at (-0.16500, 0.28579, 0.00000);
            \coordinate (0.66000, 0.00000, 0.61878) at (0.66000, 0.00000, 0.61878);
            \coordinate (-0.33000, -0.57158, 0.61878) at (-0.33000, -0.57158, 0.61878);
            \coordinate (-0.33000, 0.57158, 0.61878) at (-0.33000, 0.57158, 0.61878);
            \coordinate (-0.43165, 0.00000, 1.00000) at (-0.43165, 0.00000, 1.00000);
            \coordinate (0.21583, 0.37382, 1.00000) at (0.21583, 0.37382, 1.00000);
            \coordinate (0.21583, -0.37382, 1.00000) at (0.21583, -0.37382, 1.00000);
            \coordinate (0.00000, 0.00000, 1.18524) at (0.00000, 0.00000, 1.18524);
            \draw[edge,back] (0.33000, 0.00000, 0.00000) -- (-0.16500, -0.28579, 0.00000);
            \draw[edge,back] (-0.16500, -0.28579, 0.00000) -- (-0.16500, 0.28579, 0.00000);
            \draw[edge,back] (-0.16500, -0.28579, 0.00000) -- (-0.33000, -0.57158, 0.61878);
            \draw[edge,back] (-0.33000, -0.57158, 0.61878) -- (-0.43165, 0.00000, 1.00000);
            \draw[edge,back] (-0.33000, -0.57158, 0.61878) -- (0.21583, -0.37382, 1.00000);
            \node[vertex] at (-0.16500, -0.28579, 0.00000)     {};
            \node[vertex] at (-0.33000, -0.57158, 0.61878)     {};
            \fill[facet] (0.00000, 0.00000, 1.18524) -- (-0.43165, 0.00000, 1.00000) -- (-0.33000, 0.57158, 0.61878) -- (0.21583, 0.37382, 1.00000) -- cycle {};
            \fill[facet] (0.00000, 0.00000, 1.18524) -- (0.21583, 0.37382, 1.00000) -- (0.66000, 0.00000, 0.61878) -- (0.21583, -0.37382, 1.00000) -- cycle {};
            \fill[facet] (0.21583, 0.37382, 1.00000) -- (0.66000, 0.00000, 0.61878) -- (0.33000, 0.00000, 0.00000) -- (-0.16500, 0.28579, 0.00000) -- (-0.33000, 0.57158, 0.61878) -- cycle {};
            \draw[edge] (0.33000, 0.00000, 0.00000) -- (-0.16500, 0.28579, 0.00000);
            \draw[edge] (0.33000, 0.00000, 0.00000) -- (0.66000, 0.00000, 0.61878);
            \draw[edge] (-0.16500, 0.28579, 0.00000) -- (-0.33000, 0.57158, 0.61878);
            \draw[edge] (0.66000, 0.00000, 0.61878) -- (0.21583, 0.37382, 1.00000);
            \draw[edge] (0.66000, 0.00000, 0.61878) -- (0.21583, -0.37382, 1.00000);
            \draw[edge] (-0.33000, 0.57158, 0.61878) -- (-0.43165, 0.00000, 1.00000);
            \draw[edge] (-0.33000, 0.57158, 0.61878) -- (0.21583, 0.37382, 1.00000);
            \draw[edge] (-0.43165, 0.00000, 1.00000) -- (0.00000, 0.00000, 1.18524);
            \draw[edge] (0.21583, 0.37382, 1.00000) -- (0.00000, 0.00000, 1.18524);
            \draw[edge] (0.21583, -0.37382, 1.00000) -- (0.00000, 0.00000, 1.18524);
            \node[vertex] at (0.33000, 0.00000, 0.00000)     {};
            \node[vertex] at (-0.16500, 0.28579, 0.00000)     {};
            \node[vertex] at (0.66000, 0.00000, 0.61878)     {};
            \node[vertex] at (-0.33000, 0.57158, 0.61878)     {};
            \node[vertex] at (-0.43165, 0.00000, 1.00000)     {};
            \node[vertex] at (0.21583, 0.37382, 1.00000)     {};
            \node[vertex] at (0.21583, -0.37382, 1.00000)     {};
            \node[vertex] at (0.00000, 0.00000, 1.18524)     {};
        \end{tikzpicture}
        \caption{}
        \label{fig:U47-beaut}
    \end{subfigure}
    \begin{subfigure}[b]{0.17\textwidth} 
        \centering 
        \begin{tikzpicture}%
            [x={(-0.623765cm, -0.245664cm)},
            y={(0.781612cm, -0.196069cm)},
            z={(0.000014cm, 0.949319cm)},
            scale=1.000000,
            back/.style={color=black, dashed, thick},
            edge/.style={color=black, thick},
            facet/.style={fill=white,fill opacity=0.75},
            vertex/.style={inner sep=0.5pt,circle,draw=black,fill=black}]
            %
            %
            
            \coordinate (1.00000, 0.00000, 0.00000) at (1.00000, 0.00000, 0.00000);
            \coordinate (0.50000, -0.86603, 0.00000) at (0.50000, -0.86603, 0.00000);
            \coordinate (-0.50000, -0.86603, 0.00000) at (-0.50000, -0.86603, 0.00000);
            \coordinate (-1.00000, 0.00000, 0.00000) at (-1.00000, 0.00000, 0.00000);
            \coordinate (-0.50000, 0.86603, 0.00000) at (-0.50000, 0.86603, 0.00000);
            \coordinate (0.50000, 0.86603, 0.00000) at (0.50000, 0.86603, 0.00000);
            \coordinate (0.86603, 0.50000, 0.45000) at (0.86603, 0.50000, 0.45000);
            \coordinate (0.00000, -1.00000, 0.45000) at (0.00000, -1.00000, 0.45000);
            \coordinate (-0.86603, 0.50000, 0.45000) at (-0.86603, 0.50000, 0.45000);
            \coordinate (0.00000, 0.00000, 1.06471) at (0.00000, 0.00000, 1.06471);
            \draw[edge,back] (0.50000, -0.86603, 0.00000) -- (-0.50000, -0.86603, 0.00000);
            \draw[edge,back] (-0.50000, -0.86603, 0.00000) -- (-1.00000, 0.00000, 0.00000);
            \draw[edge,back] (-0.50000, -0.86603, 0.00000) -- (0.00000, -1.00000, 0.45000);
            \draw[edge,back] (-1.00000, 0.00000, 0.00000) -- (-0.50000, 0.86603, 0.00000);
            \draw[edge,back] (-1.00000, 0.00000, 0.00000) -- (-0.86603, 0.50000, 0.45000);
            \node[vertex] at (-0.50000, -0.86603, 0.00000)     {};
            \node[vertex] at (-1.00000, 0.00000, 0.00000)     {};
            \fill[facet] (0.00000, 0.00000, 1.06471) -- (0.86603, 0.50000, 0.45000) -- (0.50000, 0.86603, 0.00000) -- (-0.50000, 0.86603, 0.00000) -- (-0.86603, 0.50000, 0.45000) -- cycle {};
            \fill[facet] (0.86603, 0.50000, 0.45000) -- (1.00000, 0.00000, 0.00000) -- (0.50000, 0.86603, 0.00000) -- cycle {};
            \fill[facet] (0.00000, 0.00000, 1.06471) -- (0.86603, 0.50000, 0.45000) -- (1.00000, 0.00000, 0.00000) -- (0.50000, -0.86603, 0.00000) -- (0.00000, -1.00000, 0.45000) -- cycle {};
            \draw[edge] (1.00000, 0.00000, 0.00000) -- (0.50000, -0.86603, 0.00000);
            \draw[edge] (1.00000, 0.00000, 0.00000) -- (0.50000, 0.86603, 0.00000);
            \draw[edge] (1.00000, 0.00000, 0.00000) -- (0.86603, 0.50000, 0.45000);
            \draw[edge] (0.50000, -0.86603, 0.00000) -- (0.00000, -1.00000, 0.45000);
            \draw[edge] (-0.50000, 0.86603, 0.00000) -- (0.50000, 0.86603, 0.00000);
            \draw[edge] (-0.50000, 0.86603, 0.00000) -- (-0.86603, 0.50000, 0.45000);
            \draw[edge] (0.50000, 0.86603, 0.00000) -- (0.86603, 0.50000, 0.45000);
            \draw[edge] (0.86603, 0.50000, 0.45000) -- (0.00000, 0.00000, 1.06471);
            \draw[edge] (0.00000, -1.00000, 0.45000) -- (0.00000, 0.00000, 1.06471);
            \draw[edge] (-0.86603, 0.50000, 0.45000) -- (0.00000, 0.00000, 1.06471);
            \node[vertex] at (1.00000, 0.00000, 0.00000)     {};
            \node[vertex] at (0.50000, -0.86603, 0.00000)     {};
            \node[vertex] at (-0.50000, 0.86603, 0.00000)     {};
            \node[vertex] at (0.50000, 0.86603, 0.00000)     {};
            \node[vertex] at (0.86603, 0.50000, 0.45000)     {};
            \node[vertex] at (0.00000, -1.00000, 0.45000)     {};
            \node[vertex] at (-0.86603, 0.50000, 0.45000)     {};
            \node[vertex] at (0.00000, 0.00000, 1.06471)     {};
        \end{tikzpicture}
        \caption{}
        \label{fig:U47-truncated}
    \end{subfigure}
    \caption{The five different polytopes arising as chambers in the $U_{4, 7}$ arrangement.}
    \label{fig:polys-U47}
\end{figure}

There are a total of 84 chambers; 22 are simplices, 22 are triangular prisms, 30
are the polytopes seen in Figure~\ref{fig:U47-pinched}, six are the polytopes
seen in Figure~\ref{fig:U47-beaut}, and four are truncated simplices as seen in
Figure~\ref{fig:U47-truncated}. The $h$-vectors of the barycentric subdivisions
are palindromic, and the first values different from $1$ are $11$, $17$, $23$,
$29$, and $29$ respectively. Thus, 
\begin{align*} 
    \mathcal{N}_{U_{4, 7}}(1, T) &= 22(1 + 11T + 11T^2 + T^3) + 22(1 + 17T + 17T^2 + T^3) \\
    &\qquad + 30(1 + 23T + 23T^2 + T^3) + (4+6)(1 + 29T + 29T^2 + T^3). 
\end{align*}
This has the nice coincidence that 
\begin{align*} 
    \mathcal{N}_{U_{4, 7}}(1, T) &= 84 (1 + 19T + 19T^2 + T^3).
\end{align*} 
Curiously, $(1,19,19,1)$ is the $h$-vector of the barycentric subdivision of the pyramid over a pentagon.

\subsection{Rank $4$ oriented matroids}

In this section, we prove that $\mathcal{N}_M(1, T)$, for an oriented matroid
$M=(E, \covecs)$ of rank $4$, is bounded above coefficient-wise by
$\pi_M(1)\EP{r}{B}(T)$. The next lemma determines the coefficients of
$\mathcal{N}_M(1, T)$ in terms of the face lattice of $M$. To simplify notation,
we define 
\begin{align*} 
    \mathfrak{f}_{k}(\covecs) := \left|\widehat{\Delta}_{k+1}\left(\propF(\covecs)\right) \right| = \left|\left\{F\in \Delta_{k+1}\left(\propF(\covecs)\right) : F \text{ ends at a tope} \right\}\right|. 
\end{align*}
For $f(T) = \sum_{k\geq 0}a_kT^k$, let $f(T)[T^k]=a_k$.

\begin{lem}\label{lem:N-terms}
    Let $M=(E,\covecs)$ be an oriented matroid of rank $r$. For $\ell\in
    [r-1]_0$, 
    \begin{align*} 
        \mathcal{N}_M(1, T)[T^{\ell}] &= \sum_{k=0}^\ell (-1)^{\ell-k} \mathfrak{f}_{k}(\covecs) \binom{r-k-1}{\ell-k}.
    \end{align*}
\end{lem}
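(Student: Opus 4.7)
The plan is to start from the geometric description of $\mathcal{N}_M(1,T)$ already provided by Proposition~\ref{prop:h-vectors} and then simply unfold the standard definition relating $h$-polynomials to $f$-polynomials. Concretely, applying Proposition~\ref{prop:h-vectors} together with the identity $h(\Sigma;T) = (1-T)^{d}\,f(\Sigma;T/(1-T))$ for a simplicial complex of dimension $d$, and noting that each $\Sigma(\tau)$ has dimension $r-2$ (since the open interval $(\hat{0}_{\covecs},\tau)$ has elements of ranks $1,\dots,r-1$), I would write
\begin{align*}
    \mathcal{N}_M(1,T) \;=\; \sum_{\tau\in\topes(\covecs)} h(\Sigma(\tau);T) \;=\; \sum_{\tau\in\topes(\covecs)} \sum_{k=0}^{r-1} \left|\Sigma_k(\tau)\right| T^k (1-T)^{r-1-k}.
\end{align*}

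Next I would identify the tope-summed $f$-vector with $\mathfrak{f}_k(\covecs)$. A length-$k$ flag in $(\hat{0}_{\covecs},\tau)$, together with the tope $\tau$ appended on top, is exactly a length-$(k+1)$ flag in $\propF(\covecs)$ ending at $\tau$; this bijection is one-to-one and the tope of such a flag is determined by the flag itself, so
\begin{align*}
    \sum_{\tau\in\topes(\covecs)} \left|\Sigma_k(\tau)\right| \;=\; \left|\widehat{\Delta}_{k+1}(\propF(\covecs))\right| \;=\; \mathfrak{f}_k(\covecs).
\end{align*}
Substituting this back yields the compact form
\begin{align*}
    \mathcal{N}_M(1,T) \;=\; \sum_{k=0}^{r-1} \mathfrak{f}_k(\covecs)\, T^k (1-T)^{r-1-k}.
\end{align*}

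The final step is a binomial expansion. Writing $(1-T)^{r-1-k} = \sum_{j=0}^{r-1-k} (-1)^j \binom{r-1-k}{j} T^j$ and collecting the coefficient of $T^\ell$ (which arises from $j=\ell-k$ and forces $0\le k\le \ell$) gives
\begin{align*}
    \mathcal{N}_M(1,T)[T^\ell] \;=\; \sum_{k=0}^{\ell} (-1)^{\ell-k} \mathfrak{f}_k(\covecs) \binom{r-k-1}{\ell-k},
\end{align*}
as claimed. No serious obstacle appears here: the whole argument is essentially the inverse binomial transform taking the $f$-vector of $\bigsqcup_\tau \Sigma(\tau)$ to its $h$-vector, so the only point that requires attention is indexing---checking that the dimension of $\Sigma(\tau)$ is $r-2$ (equivalently, that its $f$-polynomial has degree $r-1$) and that the bijection between $\Sigma_k(\tau)$ and the length-$(k+1)$ tope-ending flags is correct.
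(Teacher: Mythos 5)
Your proposal is correct and follows essentially the same route as the paper: invoke Proposition~\ref{prop:h-vectors}, convert each $h(\Sigma(\tau);T)$ to the $f$-vector via the standard $h$-from-$f$ formula, identify $\sum_{\tau}\left|\Sigma_k(\tau)\right|$ with $\mathfrak{f}_k(\covecs)$, and read off the coefficient of $T^{\ell}$. Your version merely spells out the binomial expansion and the flag bijection that the paper states more tersely; the indexing (dimension $r-2$, $f$-polynomial of degree $r-1$) is handled correctly.
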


\begin{proof} 
    Let $\tau\in\topes(\covecs)$ and let $f(\Sigma(\tau); T)$ be the
    $f$-polynomial. The coefficient of $T^{\ell}$ in the $h$-polynomial of $\Sigma(\tau)$ is 
    \begin{align*} 
        h_\ell(\Sigma(\tau)) = \sum_{k=0}^\ell (-1)^{\ell-k}f_{k}(\Sigma(\tau)) \binom{r-k-1}{\ell-k}. 
    \end{align*}
    By Proposition~\ref{prop:h-vectors}, 
    \begin{align*} 
        \mathcal{N}_M(1, T)[T^{\ell}] &= \sum_{\tau\in\topes(\covecs)} \sum_{k=0}^\ell (-1)^{\ell-k}f_{k}(\Sigma(\tau)) \binom{r-k-1}{\ell-k} \\
        &= \sum_{k=0}^\ell (-1)^{\ell-k} \mathfrak{f}_{k}(\covecs) \binom{r-k-1}{\ell-k} . \qedhere 
    \end{align*}
\end{proof}

\begin{prop}\label{prop:rank-4}
    If $M$ is an orientable matroid of rank $r\geq 3$, then 
    \begin{align*} 
        \mathcal{N}_M(1, T)[T] < \pi_M(1) \EP{r}{B}(T)[T] . 
    \end{align*}
    If $M$ is of rank $4$, then $\mathcal{N}_M(1, T)< \pi_M(1)
    E_4^{\mathsf{B}}(T)$. 
\end{prop}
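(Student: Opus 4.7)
I would first reduce the second statement to the first using the palindromy from Theorem~\ref{thm:oriented-lower-bound}. At rank $r=4$ the identity $\mathcal{N}_M(1,T^{-1}) = T^3\mathcal{N}_M(1,T)$ combined with Lemma~\ref{lem:N-terms} at $\ell=0$ (which gives $\mathcal{N}_M(1,T)[T^0] = \mathfrak{f}_0(\covecs) = \pi_M(1)$) forces
\[ \mathcal{N}_M(1,T) = \pi_M(1) + aT + aT^2 + \pi_M(1)T^3 \]
for some $a\in\N_0$. Since the extremal coefficients of $\pi_M(1)\EP{4}{B}(T) = \pi_M(1)(1+23T+23T^2+T^3)$ match exactly, the coefficient-wise inequality $\mathcal{N}_M(1,T) < \pi_M(1)\EP{4}{B}(T)$ reduces to the strict inequality $a < 23\pi_M(1)$, which is precisely the first statement at $r=4$.

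For the first statement at arbitrary rank $r\geq 3$, Lemma~\ref{lem:N-terms} at $\ell=1$ yields
\[ \mathcal{N}_M(1,T)[T] = \mathfrak{f}_1(\covecs) - (r-1)\pi_M(1). \]
A direct count gives $\EP{r}{B}(T)[T] = 3^{r-1} - r$ (the linear $h$-coefficient of the barycentric subdivision of the $(r-2)$-sphere bounding the $(r-1)$-cross polytope), so the claim is equivalent to the single combinatorial inequality
\[ \mathfrak{f}_1(\covecs) < (3^{r-1}-1)\pi_M(1). \]

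The standard correspondences, namely that covectors with zero flat $F$ biject with topes of $M/F$ and that topes refining a covector $C$ with $\zero(C)=F$ biject with topes of $M|F$, together with Las Vergnas--Zaslavsky, let me rewrite the left side as
\[ \mathfrak{f}_1(\covecs) = \sum_{F\in\propL(M)} \pi_{M/F}(1)\pi_{M|F}(1). \]

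The main obstacle is establishing this averaged bound strictly. A per-tope inequality is not available, since individual tope polytopes can have arbitrarily many faces (for instance a truncated icosahedron can arise as a rank-$4$ chamber), so the argument has to exploit the global structure of the oriented matroid. I would partition the sum according to the rank of $F$, interpret each factor via Las Vergnas--Zaslavsky, and combine this with matroid-theoretic factorization inequalities controlling $\pi_{M/F}(1)\pi_{M|F}(1)$ in terms of $\pi_M(1)$, with rank $3$ handled by Lemma~\ref{lem:norm-N-rank3}. Strictness is the delicate part: for uniform matroids $U_{r,n}$ the ratio $\mathfrak{f}_1(\covecs)/\pi_M(1)$ approaches $3^{r-1}-1$ as $n\to\infty$, so the argument must secure a positive gap that nevertheless collapses in this limit.
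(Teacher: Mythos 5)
Your reductions are exactly the ones the paper makes: palindromy and degree $r-1$ from Theorem~\ref{thm:oriented-lower-bound} reduce the rank-$4$ claim to the linear coefficient; Lemma~\ref{lem:N-terms} gives $\mathcal{N}_M(1,T)[T]=\mathfrak{f}_1(\covecs)-(r-1)|\topes(\covecs)|$ with $|\topes(\covecs)|=\pi_M(1)$; and $\EP{r}{B}(T)[T]=3^{r-1}-r$, so everything hinges on the single inequality $\mathfrak{f}_1(\covecs)<(3^{r-1}-1)\,|\topes(\covecs)|$. You also correctly diagnose that a per-tope bound is false (a rank-$3$ tope that is a $k$-gonal cone already has $2k>3^{2}-1$ proper faces for $k\geq 5$), so only an averaged bound can work, and your identity $\mathfrak{f}_1(\covecs)=\sum_{F\in\propL(M)}\pi_{M/F}(1)\pi_{M|F}(1)$ via Lemma~\ref{lem:face-poset} is correct.

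The gap is that this averaged inequality --- which you yourself call the main obstacle --- is never established. The paper does not derive it from ``matroid-theoretic factorization inequalities'' on $\pi_{M/F}(1)\pi_{M|F}(1)$; it simply invokes the known aggregate $f$-vector bound for topes, Proposition~4.6.9 of~\cite{OrientedMatroids} (in the circle of ideas of~\cite{Fukudaetal,Varchenko}), which says precisely that the total number of proper nonzero faces over all topes is strictly less than $\bigl(\sum_{j=0}^{r-2}2^{r-1-j}\binom{r-1}{j}\bigr)|\topes(\covecs)|=(3^{r-1}-1)|\topes(\covecs)|$, i.e.\ on average a tope has fewer faces than the $(r-1)$-cube. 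Your proposed route of splitting the flat sum by rank and controlling each product by $\pi_M(1)$ is not carried out, and it is far from clear it can be: as you note, the ratio tends to $3^{r-1}-1$ along $U_{r,n}$, so any termwise slack must vanish in that limit, and no concrete mechanism for securing the strict global gap is given. Without either proving the averaged face-count bound or citing a result of this type, the proof is incomplete at its central step; with that citation added, your argument coincides with the paper's.
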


\begin{proof}
    From Theorem~\ref{thm:oriented-lower-bound}, $\mathcal{N}_M(1, T)$ has
    degree $r-1$ and is palindromic. Therefore it suffices to just prove
    the inequality between the linear coefficients. 
    
    Suppose $\covecs$ is a set of covectors such that $M=(E, \covecs)$ is an
    oriented matroid. The number $\mathfrak{f}_1(\covecs)$ counts the flags of
    length two in $\propF(\covecs)$ which end at a tope, and
    $\mathfrak{f}_0(\covecs)=|\topes(\covecs)|$. Using Proposition~4.6.9
    of~\cite{OrientedMatroids}, we have the following inequality:
    \begin{align}\label{eqn:Varchenko-bound}
        \mathfrak{f}_1(\covecs) &< \sum_{j=0}^{r-2} 2^{r-1-j}\binom{r-1}{j} |\topes(\covecs)|. 
    \end{align}
    Using~\cite[Sec.~13.1]{Petersen:EulerianNumbers}, one can express the terms
    of $\EP{r}{B}(T)$ in terms of alternating sums. The linear term is,
    thus, $3^{r-1}-r$.
    \begin{align*} 
        \mathcal{N}_M(1, T)[T] &= \mathfrak{f}_1(\covecs) - (r-1)\mathfrak{f}_0(\covecs) &  (\text{Lemma~\ref{lem:N-terms}}) & \\
        &< \left(\sum_{j=0}^{r-2} 2^{r-1-j}\binom{r-1}{j} |\topes(\covecs)|\right) - (r-1)|\topes(\covecs)| &  (\text{Eq.~\ref{eqn:Varchenko-bound}}) & \\ 
        &= (3^{r-1}-1)|\topes(\covecs)| - (r-1)|\topes(\covecs)| \\
        &= \pi_M(1) \EP{r}{B}[T] .  & & 
    \end{align*}
    The penultimate equality is seen by counting, in two different ways, the
    number of ways to color $r-1$ balls with three colors. 
\end{proof}

\section{Extremal families}\label{sec:extreme}

In this section, we prove Theorem~\ref{thm:bounds} in two parts by constructing
infinite families of matroids whose normalized coarse flag polynomial is
arbitrarily close to the bounds given in Conjecture~\ref{conj:bounds}. The upper
bound is witnessed by the family of uniform matroids, and the lower
bound is witnessed by the family of finite projective geometries. See
Section~\ref{subsec:matroids} for definitions.

In what follows, we compute limits of univariate polynomials of a fixed degree.
Identifying degree $d$ polynomials $a_0 + a_1T + \cdots + a_dT^d$ with the
points $(a_0,\dots, a_d)\in\mathbb{R}^{d+1}$, the limit is determined using the
Euclidean norm in $\mathbb{R}^{d+1}$.

\subsection{The upper bound}

The next lemma relates the type $\mathsf{B}$ Eulerian polynomial $\EP{r}{B}$
with the type $\mathsf{A}$ Eulerian polynomial
$\EP{r}{A}$, which may be of independent interest. 

\begin{lem}\label{lem:EulerianAB}
    For $r\in \N$, 
    \begin{align}\label{eqn:EulerianAB}
        \EP{r}{B}(T) &= (1 - T)^{r-1} + \sum_{k = 1}^{r - 1} 2^k \binom{r - 1}{k} T(1 - T)^{r-k-1}E_k^{\mathsf{A}}(T).
    \end{align}
\end{lem}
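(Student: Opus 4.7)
The plan is to verify~\eqref{eqn:EulerianAB} through a short generating function computation based on classical Worpitzky-type identities for the Eulerian polynomials. The key inputs, which appear in~\cite[Ch.~11, Ch.~13]{Petersen:EulerianNumbers} and follow from the interpretations of $\EP{r}{A}(T)$ and $\EP{r}{B}(T)$ as descent-generating polynomials for the symmetric and hyperoctahedral groups, are
\[
\sum_{i \geq 0} (2i+1)^{r-1} T^i = \frac{\EP{r}{B}(T)}{(1-T)^{r}}, \qquad \sum_{i \geq 0} i^{k} T^i = \frac{T\cdot \EP{k}{A}(T)}{(1-T)^{k+1}} \quad (k\geq 1),
\]
supplemented by the elementary identity $\sum_{i \geq 0} T^i = (1-T)^{-1}$.

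First I expand the left-hand factor using the binomial theorem,
\[
(2i+1)^{r-1} = \sum_{k=0}^{r-1} \binom{r-1}{k}\, 2^{k}\, i^{k},
\]
and interchange the order of summation in the first identity above. Peeling off the $k=0$ term (which contributes $1/(1-T)$) and substituting the closed-form expressions for $\sum_{i\geq 0} i^{k} T^i$ recasts $\EP{r}{B}(T)/(1-T)^{r}$ as
\[
\frac{1}{1-T} + \sum_{k=1}^{r-1} \binom{r-1}{k}\, 2^{k}\, \frac{T\cdot \EP{k}{A}(T)}{(1-T)^{k+1}}.
\]

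Multiplying through by $(1-T)^{r}$ then clears every denominator simultaneously and produces exactly the right-hand side of~\eqref{eqn:EulerianAB}. The argument thus reduces to careful bookkeeping of a binomial expansion followed by a rational-function manipulation; no serious obstacle is expected, and the only nontrivial inputs are the two Worpitzky-type identities recalled above. (A more structural proof, based on grouping chains in $\mathrm{sd}(\partial \diamond_{r-1})$ by the smallest ``full-support'' face they contain and exploiting the cross-polytope's decomposition into $2^{r-1}$ orthant simplices, is also possible but seems to require more care than the generating-function route.)
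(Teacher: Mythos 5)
Your argument is correct, but it takes a genuinely different route from the paper's. You deduce \eqref{eqn:EulerianAB} from the two Carlitz--Worpitzky-type identities $\sum_{i\geq 0}(2i+1)^{r-1}T^i=\EP{r}{B}(T)/(1-T)^{r}$ and $\sum_{i\geq 0}i^{k}T^i=T\,\EP{k}{A}(T)/(1-T)^{k+1}$ (for $k\geq 1$), expanding $(2i+1)^{r-1}$ binomially, peeling off the $k=0$ term, and clearing denominators; the bookkeeping checks out, and your indexing matches the paper's conventions ($\EP{r}{A}$ is the descent polynomial of $S_r$ and $\EP{r}{B}$ that of the hyperoctahedral group of rank $r-1$, consistent with $\EP{3}{A}=1+4T+T^2$, $\EP{3}{B}=1+6T+T^2$ and with the linear coefficient $3^{r-1}-r$ used later in the paper), so the exponents $r-1$ and $(1-T)^r$ are exactly right. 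The paper instead argues by induction on $r$: it shows that the right-hand side $P_r(T)$ of \eqref{eqn:EulerianAB} satisfies the type $\mathsf{B}$ recurrence $E^{\mathsf{B}}_{r+1}=(1+(2r-1)T)\EP{r}{B}+2T(1-T)\,\d\EP{r}{B}/\d T$, using the analogous type $\mathsf{A}$ recurrence to rewrite the derivative of $P_r$, both recurrences being quoted from \cite[Thms.~1.4 \&~13.2]{Petersen:EulerianNumbers}. Both proofs rely only on standard inputs from the same reference; your generating-function computation is shorter, avoids induction, and makes the identity transparent as ``Worpitzky for type $\mathsf{B}$ via Worpitzky for type $\mathsf{A}$,'' whereas the paper's recurrence argument avoids any appeal to power-series identities. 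In a final write-up you should just state explicitly the normalization of $\EP{r}{B}$ (degree $r-1$) when quoting the type $\mathsf{B}$ identity, since the whole computation hinges on that shift; the alternative geometric argument you sketch in parentheses is not needed.
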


\begin{proof}
    Let $P_r(T)$ denote the polynomial on the right side
    in~\eqref{eqn:EulerianAB}. It is clear that $P_1(T)=E^{\mathsf{B}}_1(T)=1$,
    so we assume $r\geq 2$. Recall two recurrence relations concerning the two
    Eulerian polynomials~\cite[Thms.~1.4 \&~13.2]{Petersen:EulerianNumbers};
    namely, 
    \begin{align*} 
        E_{r+1}^{\mathsf{A}} &= (1 + rT)\EP{r}{A} + T(1 - T)\dfrac{\d \EP{r}{A}}{\d T},  \\
        E_{r+1}^{\mathsf{B}} &= (1 + (2r-1)T) \EP{r}{B} + 2T(1 - T)\dfrac{\d \EP{r}{B}}{\d T}. 
    \end{align*}
    We will prove that $P_r(T)$ satisfies the type $\mathsf{B}$ recurrence
    relation, and thus the lemma will follow. 

    Applying the type $\mathsf{A}$ recurrence relation on Eulerian polynomials,
    we have 
    \begin{align*}
        2T(1-T)\dfrac{\d P_r}{\d T} &= 2 T (1-r)(1 - T)^{r-1} \\
        &\qquad + \sum_{k=1}^{r-1} 2^{k+1} \binom{r-1}{k} T (1 - T)^{r-k-1} \left(E_{k+1}^{\mathsf{A}} - rT E_k^{\mathsf{A}}\right).
    \end{align*}
    Lastly, we have  
    \begin{align*} 
        & (1 + (2r-1)T)P_r(T) + 2T(1 - T)\dfrac{\d P_r}{\d T}(T) \\
        &= (1 - T)^r + 2T(1 - T)^{r-1} + \sum_{k=1}^{r-1} 2^{k+1}\binom{r-1}{k}T(1-T)^{r-k-1}E_{k+1}^{\mathsf{A}} \\
        &\qquad + \sum_{k=1}^{r-1} 2^{k}\binom{r-1}{k}T(1-T)^{r-k}E_{k}^{\mathsf{A}} \\
        &= (1 - T)^r + 2T(1 - T)^{r-1} + 2^rT\EP{r}{A} + 2(r-1)T(1-T)^{r-1} \\
        &\qquad + \sum_{k=2}^{r-1} 2^{k}\binom{r-1}{k-1}T(1-T)^{r-k}E_{k}^{\mathsf{A}} + \sum_{k=2}^{r-1} 2^{k}\binom{r-1}{k}T(1-T)^{r-k}E_{k}^{\mathsf{A}} \\
        &= (1 - T)^r + \sum_{k=1}^{r} 2^{k}\binom{r}{k}T(1-T)^{r-k}E_{k}^{\mathsf{A}} = P_{r+1}(T). \qedhere
    \end{align*}
\end{proof}

\begin{prop}\label{prop:uniform-limit}
    For $r\in\N$,
    \begin{align*} 
        \lim_{m\rightarrow\infty} \dfrac{\mathcal{N}_{U_{r,m}}(1, T)}{\pi_{U_{r,m}}(1)} &= E^{\mathsf{B}}_r(T). 
    \end{align*}
\end{prop}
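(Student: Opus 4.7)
The approach is to derive a closed form for $\mathcal{N}_{U_{r,m}}(1,T)$ by exploiting the Boolean structure of low-rank flats in the uniform matroid, identify the relevant pieces as Eulerian polynomials, and pass to the limit using Lemma~\ref{lem:EulerianAB}.

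First I would parametrize each flag $F=(X_1<\cdots<X_\ell)$ in $\Delta(\propL(U_{r,m}))$ by its rank sequence $0<k_1<\cdots<k_\ell\le r-1$ (so $|X_i|=k_i$). Because every subset of size at most $r-1$ is a flat and every interval between two such flats is Boolean, while $U_{r,m}/X_\ell\cong U_{r-k_\ell,\,m-k_\ell}$, one has $\pi_F(Y)=(1+Y)^{k_\ell}\,\pi_{U_{r-k_\ell,m-k_\ell}}(Y)$, and the number of flags with that rank sequence is $\binom{m}{k_\ell}\binom{k_\ell}{k_{\ell-1}}\cdots\binom{k_2}{k_1}$. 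Grouping by $k_\ell=k$, the sum of $\binom{k}{k_{\ell-1}}\cdots\binom{k_2}{k_1}$ over $0<k_1<\cdots<k_{\ell-1}<k$ counts ordered set partitions of $[k]$ into $\ell$ nonempty blocks, namely $\ell!\,S(k,\ell)$. This yields
\begin{equation*}
\mathcal{N}_{U_{r,m}}(1,T) = \pi_{U_{r,m}}(1)(1-T)^{r-1} + \sum_{k=1}^{r-1} 2^k \binom{m}{k}\pi_{U_{r-k,m-k}}(1)\sum_{\ell=1}^{k}\ell!\,S(k,\ell)\,T^\ell(1-T)^{r-1-\ell}.
\end{equation*}

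Next I would identify the inner sum with an Eulerian polynomial. Since $\Delta(\overline{B_k})$ is the barycentric subdivision of $\partial\Delta^{k-1}$, its $h$-polynomial is $\EP{k}{A}(T)$; unwinding $h=(1-T)^{k-1}f(T/(1-T))$ with $f$-polynomial $\sum_{\ell\ge 1}\ell!\,S(k,\ell)\,T^{\ell-1}$ yields
\[
\EP{k}{A}(T)=\sum_{\ell=1}^{k}\ell!\,S(k,\ell)\,T^{\ell-1}(1-T)^{k-\ell},
\]
so the inner sum equals $T(1-T)^{r-k-1}\EP{k}{A}(T)$.

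Finally, I would compute $\pi_{U_{r,m}}(Y)$ directly from the M\"obius function on $\LoF(U_{r,m})$, obtaining $\pi_{U_{r,m}}(1)\sim 2m^{r-1}/(r-1)!$ and, more generally,
\[
\lim_{m\to\infty}\frac{\binom{m}{k}\pi_{U_{r-k,m-k}}(1)}{\pi_{U_{r,m}}(1)}=\binom{r-1}{k}.
\]
Dividing the displayed formula by $\pi_{U_{r,m}}(1)$ and passing to this termwise limit delivers $(1-T)^{r-1}+\sum_{k=1}^{r-1}2^k\binom{r-1}{k}T(1-T)^{r-k-1}\EP{k}{A}(T)$, which equals $\EP{r}{B}(T)$ by Lemma~\ref{lem:EulerianAB}. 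The main substantive step is the Stirling-number identification of the inner flag sum with $\EP{k}{A}(T)$; the remaining ingredients are the explicit computation of $\pi_{U_{r,m}}(Y)$ and routine polynomial asymptotics in $m$.
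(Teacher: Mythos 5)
Your proposal is correct, and its overall architecture coincides with the paper's: obtain a closed form for $\mathcal{N}_{U_{r,m}}(1,T)/\pi_{U_{r,m}}(1)$ as $(1-T)^{r-1}$ plus a sum of terms $2^k\,\EP{k}{A}(T)\,T(1-T)^{r-k-1}$ with $m$-dependent coefficients, pass to the coefficient-wise limit (the same binomial asymptotics, giving $\binom{r-1}{k}$), and conclude with Lemma~\ref{lem:EulerianAB}. The one genuine difference is the source of the closed form: the paper simply quotes Proposition~6.9 of~\cite{MV:fHPseries}, whereas you re-derive it (at $Y=1$) from scratch, stratifying flags of $\propL(U_{r,m})$ by their rank sequences, using that intervals below rank $r-1$ are Boolean and $U_{r,m}/X\cong U_{r-k,m-k}$, counting chains with a fixed top via ordered set partitions ($\ell!\,S(k,\ell)$), and identifying $\sum_{\ell}\ell!\,S(k,\ell)T^{\ell-1}(1-T)^{k-\ell}=\EP{k}{A}(T)$ through the $h$-polynomial of the barycentric subdivision of $\partial\Delta^{k-1}$ --- all of which checks out, including the equivalence of your $\pi_{U_{r-k,m-k}}(1)$ with the paper's $2\sum_j\binom{m-k-1}{j}$. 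What your route buys is self-containedness (no external citation) and a transparent combinatorial explanation of where the type~$\mathsf{A}$ Eulerian polynomials in the formula come from; what the paper's route buys is brevity, since the heavy lifting is outsourced to the earlier reference. No gaps.
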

\begin{proof}
    From~\cite[Proposition~6.9]{MV:fHPseries}, 
    \begin{align*} 
        \dfrac{\mathcal{N}_{U_{r,m}}(1, T)}{\pi_{U_{r,m}}(1)} &= (1 - T)^{r-1} + \dfrac{\sum_{\ell = 1}^{r-1} \sum_{k = 0}^{r-\ell-1}\binom{m}{\ell}\binom{m-\ell-1}{k}2^{\ell} T (1- T)^{r-\ell-1}E_{\ell}^{\mathsf{A}}}{\sum_{k=0}^{r-1} \binom{m-1}{k}} \\ 
        &= (1 - T)^{r-1} + \sum_{\ell = 1}^{r-1} 2^{\ell} T (1- T)^{r-\ell-1}E_{\ell}^{\mathsf{A}}\left(\dfrac{\binom{m}{\ell}\sum_{k = 0}^{r-\ell-1}\binom{m-\ell-1}{k}}{\sum_{k=0}^{r-1} \binom{m-1}{k}}\right) .
    \end{align*} 
    For $\ell\in [r-1]$, it follows that 
    \begin{align*} 
        \lim_{m\rightarrow\infty} \dfrac{\binom{m}{\ell}\sum_{k = 0}^{r-\ell-1}\binom{m-\ell-1}{k}}{\sum_{k=0}^{r-1} \binom{m-1}{k}} &= \binom{r-1}{\ell}. 
    \end{align*}
    Therefore, 
    \begin{align}\label{eqn:limit}
        \lim_{m\rightarrow \infty} \dfrac{\mathcal{N}_{U_{r,m}}(1, T)}{\pi_{U_{r,m}}(1)} &= (1 - T)^{r-1} + \sum_{\ell = 1}^{r-1}2^{\ell}\binom{r-1}{\ell}  T (1- T)^{r-\ell-1}E_{\ell}^{\mathsf{A}}. 
    \end{align} 
    By Lemma~\ref{lem:EulerianAB}, the right side of~\eqref{eqn:limit} is
    $E^{\mathsf{B}}_r(T)$.
\end{proof}

\subsection{The lower bound}

For an indeterminate $X$, a nonnegative integer $r$, and $0\leq k\leq r$, we set
\begin{align*} 
    \binom{r}{k}_X = \dfrac{(1-X^r)(1-X^{r-1})\cdots (1-X^{r-k+1})}{(1 - X)(1-X^2) \cdots (1 - X^k)} \in \Z[X].
\end{align*}
For $I\subseteq [r-1]$, where $I=\{i_1 < \cdots < i_k\}$, we set $i_0=0$,
$i_{k+1}=r$, and
\begin{align*} 
    \binom{r}{I}_X = \prod_{m=1}^{|I|} \binom{i_{m+1}}{i_m}_X = \binom{r}{i_k}_X \binom{i_k}{i_{k-1}}_X \cdots \binom{i_2}{i_1}_X.
\end{align*}
The number of $k$-dimensional subspaces of $\mathbb{F}_q^r$ is equal to
$\binom{r}{k}_q$. For $I\subseteq [r-1]$, the number of flags $0=V_0\subset V_1
\subset \cdots \subset V_k \subset V_{k+1} = \mathbb{F}_q^r$ with $I=\{\dim(V_j)
~|~ j\in [k]\}$ is equal to $\binom{r}{I}_q$. Throughout this subsection, we
will assume that when $I\subseteq [r-1]$, then $I = \{i_1,\dots, i_k\}$, where
$i_j < i_{j+1}$ for $j\in [k-1]$. We define the polynomial 
\begin{align*} 
    \gamma_I(X, Y) &= \prod_{\ell=0}^{|I|}~\prod_{m=0}^{i_{\ell+1} - i_\ell - 1} (1 + X^mY).
\end{align*}

Let $M=PG(r-1, q)$ be the matroid determined by the projective geometry of
dimension $r$ and order $q$. Thus, the lattice of flats of $M$ is isomorphic to
the subspace lattice of the finite vector space $\mathbb{F}_q^r$. The Poincar\'e
polynomial of $M$ is $\pi_M(Y)=\gamma_{\varnothing}(q, Y)$ since the M\"obius
function values of a flat of rank $k$ in $\LoF(M)$ is $(-1)^kq^{\binom{k}{2}}$.

\begin{lem} \label{lem:cfHP-PG}
    For $r\in\N$ and a prime power $q$, 
    \begin{align*} 
        \mathcal{N}_{PG(r-1, q)}(Y, T) = \sum_{I\subseteq [r-1]} \binom{n}{I}_q  \gamma_{I}(q, Y) T^{|I|}(1 - T)^{r-1-|I|}.
    \end{align*}
\end{lem}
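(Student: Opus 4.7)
The plan is to start from the definition of $\mathcal{N}_M(Y,T)$ and organize the sum over flags of $\propL(PG(r-1,q))$ by their rank profile. Concretely, a flag $F=(X_1<\cdots<X_k)\in \Delta(\propL(PG(r-1,q)))$ corresponds to a flag $0\subsetneq V_1\subsetneq\cdots\subsetneq V_k\subsetneq \mathbb{F}_q^r$ of proper nonzero subspaces, so I attach to $F$ its rank set $I=\{r(X_1),\dots,r(X_k)\}=\{i_1<\cdots<i_k\}\subseteq[r-1]$ and group flags by $I$. The number of flags with this prescribed rank set is exactly $\binom{r}{I}_q$, by the standard count of subspace flags in $\mathbb{F}_q^r$ recalled above the statement.

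Next I compute $\pi_F(Y)$ for any flag $F$ with rank profile $I$. The minors $M/X_\ell|X_{\ell+1}$ of $PG(r-1,q)$ are again projective geometries, specifically $M/X_\ell|X_{\ell+1}\cong PG(i_{\ell+1}-i_\ell-1,q)$, because contraction and restriction on the projective geometry correspond to passing to quotient and subspace in $\mathbb{F}_q^r$. Hence
\begin{equation*}
\pi_F(Y)=\prod_{\ell=0}^{|I|}\pi_{PG(i_{\ell+1}-i_\ell-1,q)}(Y).
\end{equation*}
For a single factor, the Möbius values $(-1)^k q^{\binom{k}{2}}$ on the subspace lattice combined with the count $\binom{d}{k}_q$ of $k$-dimensional subspaces of $\mathbb{F}_q^d$ give $\pi_{PG(d-1,q)}(Y)=\sum_{k=0}^d q^{\binom{k}{2}}\binom{d}{k}_q Y^k$, and by the $q$-binomial theorem this equals $\prod_{m=0}^{d-1}(1+q^m Y)$. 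Applying this to each factor yields $\pi_F(Y)=\gamma_I(q,Y)$; this matches the normalization $\pi_M(Y)=\gamma_{\varnothing}(q,Y)$ stated just above the lemma, which serves as a sanity check.

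Finally I assemble the sum. Because $\pi_F(Y)$ and the weight $T^{|F|}(1-T)^{r-1-|F|}$ depend on $F$ only through $I$, the definition of $\mathcal{N}_M(Y,T)$ gives
\begin{equation*}
\mathcal{N}_{PG(r-1,q)}(Y,T)=\sum_{I\subseteq[r-1]}\#\{F:\text{rank profile }I\}\cdot \gamma_I(q,Y)\, T^{|I|}(1-T)^{r-1-|I|},
\end{equation*}
and substituting the flag count $\binom{r}{I}_q$ gives the claimed formula (with the $n$ in the statement read as $r$).

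There is no real obstacle: the only step that is not purely bookkeeping is the identification $\pi_{PG(d-1,q)}(Y)=\prod_{m=0}^{d-1}(1+q^mY)$, and this is a direct application of the $q$-binomial theorem. The slight subtlety is just to keep the indexing convention $i_0=0$, $i_{|I|+1}=r$ consistent between the flag count, the minor-factorization of $\pi_F$, and the definition of $\gamma_I$.
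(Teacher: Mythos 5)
Your proposal is correct and follows essentially the same route as the paper: group flags by their dimension profile $I$, observe that $\pi_F(Y)$ depends only on $I$ because the minors are smaller projective geometries (giving $\gamma_I(q,Y)$), and count flags with profile $I$ by $\binom{r}{I}_q$. The only difference is that you spell out the $q$-binomial-theorem identity $\pi_{PG(d-1,q)}(Y)=\prod_{m=0}^{d-1}(1+q^mY)$, which the paper leaves implicit, and you correctly read the $n$ in the statement as $r$.
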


\begin{proof} 
    Let $M=PG(r-1, q)$. It follows that $\Delta(\propL(M))$ is in bijection with
    the set of flags in $\mathbb{F}_q^r$ with proper nontrivial subspaces. Let
    $I\subseteq [r-1]$, and suppose $F$ and $F'$ are flags in $\mathbb{F}_q^r$
    of length $|I|$ containing proper nontrivial subspaces, whose dimensions are
    given by $I$. It follows that $\pi_F(Y) = \pi_{F'}(Y)$ since the intervals
    in $\Delta(\propL(M))$ determined by $F$ and $F'$ are isomorphic. In
    particular, $\pi_F(Y)$ is determined by $I$, the set of (proper nontrivial)
    subspace dimensions. Furthermore, each interval is isomorphic to the lattice
    of flats of some projective geometry of order $q$ with smaller dimension.
    Therefore, $\pi_F(Y) = \gamma_I(q, Y)$. 

    Since the number of flags $F$ with a given set of subspace dimensions
    $I\subseteq [r-1]$ is $\binom{r}{I}_q$, it follows that 
    \begin{align*} 
        \mathcal{N}_{M}(Y, T) &= \sum_{F\in\Delta(\propL(M))} \pi_F(Y) T^{|F|}(1 - T)^{r-1-|F|} \\
        &= \sum_{I\subseteq [r-1]} \binom{r}{I}_q \gamma_{I}(q, Y) T^{|I|}(1 - T)^{r-1-|I|} . \qedhere
    \end{align*} 
\end{proof}

\begin{prop}\label{prop:PG-limit}
    Let $r\in \N$ and 
    \begin{align*} 
        \lim_{q\rightarrow \infty} \dfrac{\mathcal{N}_{PG(r-1, q)}(1, T)}{\pi_{PG(r-1, q)}(1)} = (1 + T)^{r-1},
    \end{align*}
    where $q$ is assumed to be a prime power.
\end{prop}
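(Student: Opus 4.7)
The plan is to start from Lemma~\ref{lem:cfHP-PG} with $Y=1$, divide through by $\pi_{PG(r-1,q)}(1)=\gamma_\varnothing(q,1)$, and determine the pointwise limit of each coefficient of $T^{|I|}(1-T)^{r-1-|I|}$ as $q\to\infty$. That is, writing
$$c_I(q) \;=\; \binom{r}{I}_q \frac{\gamma_I(q,1)}{\gamma_\varnothing(q,1)},$$
I will show $\lim_{q\to\infty} c_I(q) = 2^{|I|}$ for every $I\subseteq [r-1]$, and then conclude by the binomial theorem.

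The key step is a careful asymptotic bookkeeping in $q$. Fix $I=\{i_1<\cdots<i_k\}\subseteq[r-1]$, set $i_0=0$, $i_{k+1}=r$, and write $d_\ell=i_{\ell+1}-i_\ell$ for $\ell=0,\ldots,k$, so that every $d_\ell\geq 1$. Recall that $\binom{n}{j}_q$ is a monic polynomial in $q$ of degree $j(n-j)$, so
$$\binom{r}{I}_q \;=\; \prod_{m=1}^{k} \binom{i_{m+1}}{i_m}_q \;\sim\; q^{\sum_{m=1}^k i_m d_m}.$$
For the $\gamma$-factor I will use that for $d\geq 1$, $\prod_{m=0}^{d-1}(1+q^m) \sim 2\,q^{\binom{d}{2}}$, the factor of $2$ coming from the $m=0$ term. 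Since all blocks are nonempty, this gives $\gamma_I(q,1)\sim 2^{k+1}\,q^{\sum_{\ell=0}^{k}\binom{d_\ell}{2}}$ and $\gamma_\varnothing(q,1)\sim 2\,q^{\binom{r}{2}}$.

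Two elementary identities in the $d_\ell$ make the $q$-exponents in $c_I(q)$ cancel. From $i_m = d_0+\cdots+d_{m-1}$,
$$\sum_{m=1}^k i_m d_m \;=\; \sum_{0\leq \ell < m \leq k} d_\ell d_m,$$
and expanding $\binom{r}{2}=\binom{\sum d_\ell}{2}$ gives
$$\binom{r}{2} - \sum_{\ell=0}^{k}\binom{d_\ell}{2} \;=\; \sum_{0\leq \ell < m \leq k} d_\ell d_m.$$
Combining these shows that the $q$-degree of $c_I(q)$ is zero, while the leading constant is $2^{k+1}/2 = 2^k$. Hence $c_I(q)\to 2^{|I|}$ as $q\to\infty$.

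The conclusion is then immediate from the binomial theorem:
$$\lim_{q\to\infty}\frac{\mathcal{N}_{PG(r-1,q)}(1,T)}{\pi_{PG(r-1,q)}(1)} \;=\; \sum_{I\subseteq [r-1]} 2^{|I|}\,T^{|I|}(1-T)^{r-1-|I|} \;=\; \sum_{k=0}^{r-1}\binom{r-1}{k}(2T)^k(1-T)^{r-1-k} \;=\; (1+T)^{r-1}.$$
The only nontrivial part of the argument is the degree accounting in $q$ for $\binom{r}{I}_q$ and for $\gamma_I(q,1)/\gamma_\varnothing(q,1)$; once the two identities above are observed, the cancellation of $q$-exponents and the resulting constant $2^{|I|}$ fall out cleanly. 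Nothing in the argument depends on $q$ being a prime power beyond the validity of Lemma~\ref{lem:cfHP-PG}, so the limit could equally be taken over any unbounded sequence of prime powers.
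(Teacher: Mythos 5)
Your proposal is correct and follows essentially the same route as the paper: start from Lemma~\ref{lem:cfHP-PG}, show $\binom{r}{I}_q\gamma_I(q,1)/\gamma_\varnothing(q,1)\to 2^{|I|}$, and finish with the binomial theorem. The only difference is that you spell out the $q$-degree bookkeeping behind that key limit, which the paper simply asserts, and your accounting is accurate.
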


\begin{proof}
    Let $X$ be an indeterminate, and note that for $I\subseteq [r-1]$, 
    \begin{align*} 
        \lim_{X\rightarrow \infty} \dfrac{\binom{n}{I}_X \gamma_I(X, 1)}{\gamma_{\varnothing}(X, 1)} = 2^{|I|}.
    \end{align*}
    Set $M=PG(r-1,q)$, and since $\pi_M(Y) = \phi_{\varnothing}(q, Y)$, by
    Lemma~\ref{lem:cfHP-PG}, 
    \begin{align*}
        \lim_{q\rightarrow \infty} \dfrac{\mathcal{N}_{M}(1, T)}{\pi_{M}(1)} &= \sum_{I\subseteq [r-1]} 2^{|I|} T^{|I|}(1 - T)^{r-1-|I|} \\
        &= \sum_{k=0}^{r-1} \binom{r-1}{k} (1-T)^{r-1-k} (2T)^k \\
        &= (1+T)^{r-1}. \qedhere 
    \end{align*}
\end{proof}

\subsection{Proof of Theorem~\ref{thm:bounds}}

The first part follows from Propositions~\ref{prop:uniform-limit}
and~\ref{prop:PG-limit}, and the second part follows from
Propositions~\ref{prop:rank-3} and~\ref{prop:rank-4}. \hfill $\square$

\section*{Acknowledgements}

We thank Luis Ferroni, Raman Sanyal, Benjamin Schr\"oter and Christopher Voll
for inspiring discussions. We are also grateful to the anonymous referees for
their helpful feedback. 

\bibliography{Simplicial}
\bibliographystyle{abbrv}

\end{document}